\documentclass[english]{lematema}

\title{A short note on Cayley-Salmon equations}
\titlemark{A short note on Cayley-Salmon equations}


\MSC{14J25}
\keywords{Cubic surfaces, Cayley-Salmon Equations}


\usepackage{graphicx}

\author{Marvin Anas Hahn}
\address{%
 Institut f\"ur Mathematik\\
Goethe-Universit\"at Frankfurt,\\
\email{hahn@math.uni-frankfurt.de}
}
\author{Sara Lamboglia}
\address{%
 Institut f\"ur Mathematik\\
Goethe-Universit\"at Frankfurt,\\
\email{lamboglia@math.uni-frankfurt.de}
}
\author{Alejandro Vargas}
\address{%
Mathematisches Institut\\
Universit\"at Bern\\
\email{a.vargas.math@gmail.com}
}

\date{2019/09/13}
\newcommand{\inv}[1]{{#1}^{-1}}
\newcommand{\cone}[2]{\text{Cone}_{#1} (#2)}


\newcommand{\cayley}{Cayley-Salmon equation}
\newcommand{\cayleys}{Cayley-Salmon equations}
\begin{document}

\maketitle

\begin{abstract}
A \cayley~for a smooth cubic surface $S$ in $\mathbb P^3$ is an expression of the form $l_1l_2l_3 - m_1m_2m_3 = 0$ such that the zero set is~$S$ and $l_i$, $m_j$ are homogeneous linear forms. This expression was first used by Cayley and  Salmon to study the incidence relations of the 27 lines on~$S$. There are 120 essentially distinct \cayleys~for $S$.  In this note we  give an exposition of a classical proof of this fact. We illustrate the explicit calculation to obtain these equations and we apply it to the Clebsch surface and to the octanomial form in \cite{pss19}. Finally we show that these $120$  \cayleys~ can be directly computed  using recent work by Cueto and Deopurkar  \cite{C-D}.

	
\end{abstract}

\section{Introduction and motivation}

During the early days of algebraic geometry results were mostly derived via clever constructions in projective geometry and lengthy manipulation of the equations which defined the varieties. In the latter case it was important to choose a convenient \emph{normal form} for the equations. For example, it is immediate to deduce the presence of lines on a surface $S\subset \mathbb P^3$ defined by a polynomial of the form $l_1l_2l_3 - m_1m_2m_3$, with $l_i$ and $m_j$ linear forms. In fact the lines are   the intersection  of the planes $\{l_i = 0\} \cap \{m_j = 0\}$. This was first noticed and exploited by Cayley and Salmon to investigate the incidence relations of the 27 lines on a smooth cubic surface. 

We call an equation of the form \begin{align} \label{def:steinerform} l_1l_2l_3 - m_1m_2m_3 = 0,\end{align} with $l_i$ and $m_j$ homogeneous linear forms with four variables, a \emph{\cayley}. 
The first half of this note is an exposition on a classical proof of the fact that a smooth cubic surface $S$ in $\mathbb P^3$ can be expressed as the zero-set of a \cayley~in 120 essentially distinct ways. The arguments we reproduce illustrate the kind of geometric constructions  used by Cayley, Salmon and later by Steiner. On the way we show that $S$ has 27 lines, 45 tritangent planes that intersect $S$ at 3 distinct lines, and 120 pairs of triples of tritangent planes where each triple of planes altogether intersects $S$ in nine lines.  The conclusion of the classical constructions is that knowing the equations for the 27 lines we can compute the \cayleys, and conversely, from a \cayley~it is possible to derive equations for the 27 lines. In Section \ref{sec: computing} we compute the  \cayleys{} first for the Clebsch surface and then for the   octanomial model in \cite{pss19}. The computations are implemented in \verb|Macaulay2| \cite{M2} and \verb|Sage| \cite{sage} and are available at the supplementary website\footnote{\texttt{https://github.com/Saralamboglia/CayleySalmonEquations}}.

%
 %
 
In the second part of the note we treat the question of calculating the \cayleys~without having prior knowledge of the 27 lines. We first introduce the more modern approach of considering $S$ as a del Pezzo surface of degree $3$. \cite[section V.4]{hartshorne2013algebraic}
This is a  complete non-singular surface with ample anticanonical bundle with self intersection number $3$. It is well-known that degree 3 del Pezzo surfaces may be realized as smooth cubic surfaces in $\mathbb{P}^3$ 
%
and  as the blow-up of $\mathbb P^2$ at six generic points $p_1,\ldots,p_6$, i.e. no three of them are colinear and no conic passes though all six of them (see \cite[Section I.4 and Section 5 Proposition 4.10 ]{hartshorne2013algebraic}. Furthermore, in  a family of cubics containing six generic points  there are at least $24$ cuspidal cubics (it follows from  \cite{Bra}, for a more modern explanation  \cite[Section 4]{rsss13}). Hence one can assume  that after coordinate change the points $p_1,\ldots,p_6$  lie on the cuspidal cubic $ V(z^3-xy^2)$  and therefore are of the form
\begin{equation}
\label{equ:points}
\begin{pmatrix}
1 & 1 & 1 & 1 & 1 & 1\\
d_1 & d_2 & d_3 & d_4 & d_5 & d_6\\
d_1^3 & d_2^3 & d_3^3 & d_4^3 & d_5^3 & d_6^3
\end{pmatrix}.
\end{equation}

 One of the main results of \cite{C-D} gives an explicit description of the ideal of an embedding of the surface $S$ into $\mathbb{P}^{44}$. More precisely, they compute the ideal of the image of this embedding in terms of $d_1,\dots,d_6$. We compute   the pull-back of this ideal  via a suitable morphism  $
\mathbb{P}^3 \rightarrow\mathbb{P}^{44}$ recovering all \cayleys    \;(see Proposition \ref{prop:sec 4}).

In recent years, the tropicalization of cubic surface has gotten increased attention. Especially the fact that tropicalizations of cubic surfaces may have more than $27$ lines -- they might have infinitely many lines -- has been a driving force for several new developments. The works \cite{C-D} and \cite{pss19} both aim for a better understanding of this feature, although from different perspectives. Ultimately our exposition, examples and calculations are intended to be basic tools for exploring possible implications of \cayleys~to a deeper understanding of the tropicalizations of cubic surfaces. The 120 Cayley-Salmon equations of a cubic surface contain the information of its $27$ lines. 
Therefore, a possible next step in exploring such implications is a study of \textit{tropical Cayley-Salmon equations} in the flavour of the classical approach of Cayley and Salmon. 


\section{Classical constructions of \cayleys}\label{section:classical}
Consider  $X,S \subset \mathbb P^3$ to be smooth surfaces, with  $S$ of degree $3$. The history of the 27 lines goes back to the 19th century, when Arthur Cayley observed via a parameter count that a generic cubic surface can contain only finitely many lines. He corresponded with Salmon, who replied with an algebro-combinatorial argument showing that if a smooth cubic surface contains one line $L$ then it contains exactly 27 lines. See the discussion at the end of \cite{cay49}. We follow the exposition in \cite{salmon1} and \cite{salmon2} to: sketch how to prove that $S$ has at least one line $L$, prove there are 27 lines and 45 tritangent planes associated to~$S$, and show that $S$ can be expressed as a \cayley~in 120  distinct ways.
 
  
\subsection{Cayley-Salmon's proof for the existence of a line in $S$}
\label{section:classical-way}
Here we summarize Cayley's proof in  \cite{cay49} of the following fact:

\begin{thm} \label{thm:one-line}
	Let $S \subset \mathbb P^3$ be a smooth cubic surface. Then there exists a line $L$ contained in $S$.
\end{thm}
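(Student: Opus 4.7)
The plan is a Cayley-style parameter count on the incidence variety of lines in cubics. Let $\mathbb{G} = G(1,3)$ be the Grassmannian of lines in $\mathbb{P}^3$, of dimension $4$, and let $\mathcal{H} \cong \mathbb{P}^{19}$ parameterize cubic surfaces in $\mathbb{P}^3$. Form
\[
\mathcal{I} \;=\; \bigl\{(L, X) \in \mathbb{G} \times \mathcal{H} : L \subseteq X \bigr\},
\]
with projections $\pi_1 : \mathcal{I} \to \mathbb{G}$ and $\pi_2 : \mathcal{I} \to \mathcal{H}$. The strategy is to compute $\dim \mathcal{I}$ through $\pi_1$, observe that it already equals $\dim \mathcal{H}$, and then use an explicit example to upgrade equality of dimensions to surjectivity of $\pi_2$, which yields the theorem.

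For the dimension count, fix a line $L \in \mathbb{G}$ and parameterize $L \cong \mathbb{P}^1$. The restriction of a cubic form $F$ to $L$ is a binary cubic, which has $4$ coefficients, so the condition $L \subseteq V(F)$ imposes $4$ linearly independent conditions on the $20$-dimensional space of cubic forms. Hence each fiber of $\pi_1$ is a $\mathbb{P}^{15} \subseteq \mathcal{H}$, and $\mathcal{I}$ is irreducible of dimension $4 + 15 = 19 = \dim \mathcal{H}$. Since $\mathcal{I}$ is projective, $\pi_2(\mathcal{I})$ is a closed irreducible subvariety of $\mathcal{H}$ of dimension at most $19$. To force $\pi_2(\mathcal{I}) = \mathcal{H}$ it suffices to exhibit one smooth cubic whose fiber under $\pi_2$ is finite and nonempty. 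The Fermat cubic $V(x_0^3 + x_1^3 + x_2^3 + x_3^3)$ is smooth, since its Jacobian $(3x_0^2, 3x_1^2, 3x_2^2, 3x_3^2)$ has no common zero in $\mathbb{P}^3$, and it contains the line $\{x_0 + x_1 = 0\} \cap \{x_2 + x_3 = 0\}$, as one checks by substituting $x_1 = -x_0$ and $x_3 = -x_2$.

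The hard part will be upgrading the dimension count to a genuine surjectivity statement, i.e.\ verifying that the fiber of $\pi_2$ at some specific cubic is genuinely zero-dimensional rather than positive-dimensional. One route is to enumerate the lines on the Fermat cubic directly (a classical exercise yielding the $27$ lines), which makes the fiber a finite set. A more conceptual route is a normal-bundle/rigidity computation: for a line $L_0$ on a smooth cubic $S_0$, adjunction identifies $N_{L_0/S_0}$ with $\mathcal{O}_{L_0}(-1)$, so $H^0(L_0, N_{L_0/S_0}) = 0$ and $L_0$ is infinitesimally rigid in $S_0$. This shows $d\pi_2$ is an isomorphism at $(L_0, S_0)$, so $\pi_2$ is dominant, and by properness surjective. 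I expect the exposition in the paper to instead follow Cayley's more hands-on route through tangent plane sections and their degenerations, but the underlying parameter count is the same.
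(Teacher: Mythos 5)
Your argument is correct (modulo the two standard details you flag yourself, either of which can be completed), but it is a genuinely different proof from the one the paper presents. The paper follows Cayley and Salmon's classical route: it introduces the first polar $S_A$ of $S$ with respect to an external point $A$, forms the tangent cone from $A$ (a surface of degree $n(n-1)=6$), slices it with a plane $\tilde\Pi$ to obtain a degree-$6$ plane curve, and then applies the Pl\"ucker formula with $\delta=0$, $\nu=6$, $\mu=12$ to count $\tau=27$ bitangents, each of which corresponds to a line on $S$ via a chain of bijections (Lemmas~\ref{lemma:bijection1}--\ref{lemma:correspondence-with-curve}); since $27>0$, a line exists. Your route is the modern incidence-variety count: $\dim\mathcal I = 4+15 = 19 = \dim\mathcal H$, plus one explicit smooth cubic with a finite nonempty fiber (or the rigidity statement $H^0(L, N_{L/S})=H^0(\mathcal O_L(-1))=0$) to force $\pi_2$ to be surjective. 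What your approach buys: it is shorter, avoids the Pl\"ucker machinery entirely, and actually proves the stronger statement that \emph{every} cubic surface, smooth or not, contains a line. What the paper's approach buys: it is the historical argument the paper is explicitly expositing, and the objects it constructs (tangent cones, tritangent planes, the count $27$) are reused immediately afterwards in Salmon's combinatorial argument and in the construction of the Cayley-Salmon equations, so the Pl\"ucker computation is not wasted effort. One small caveat on your write-up: the claim that ``the underlying parameter count is the same'' is not accurate --- the paper's count is a bitangent count on a plane sextic via Pl\"ucker formulas, not a dimension count on an incidence variety; the two are logically independent arguments.
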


The proof goes through several geometrical facts known at the time about tangent spaces, a clever geometrical construction, and the Pl\"ucker formulas for plane curves. We do not give proofs to the lemmas, these follow mostly from straightforward calculations. Instead we focus on discussing their consequences. When a deeper insight is needed for a proof, we sketch the idea.

The first objects to consider are  tangent planes to surfaces. Denote by $T_P X$ the tangent plane of an algebraic surface $X$ at a point~$P = [p_0:p_1:p_2:p_3]\in X$. If $X = V(f)$ then $T_P X$ is the zero set of $x_0 \frac \partial {\partial x_0} f|_P + x_1 \frac \partial {\partial x_1} f|_P + x_2 \frac \partial {\partial x_2} f|_P+ x_3 \frac \partial {\partial x_3} f|_P$.

\begin{lemma} \label{lemma:singular-point}
	Let $X \subset \mathbb P^3$ be a smooth surface, $\Pi \subset \mathbb P^3$ a plane such that $\Pi \not \subset X$,  $C = X \cap \Pi$ the scheme theoretic intersection, and $P \in C$. Then $\Pi = T_P X$ if and only if $C$ is singular at $P$. 
\end{lemma}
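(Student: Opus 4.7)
The plan is to reduce to an affine local computation around $P$, where both the tangent plane condition and the singularity condition can be read off from the linear term of the defining equation. Both implications then fall out of the same observation, so I would prove the equivalence in one stroke.

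First I would choose homogeneous coordinates $(x_0:x_1:x_2:x_3)$ so that $P$ lies in the affine chart $x_0 \neq 0$ and corresponds to the origin, and let $f(x_1,x_2,x_3) := F(1,x_1,x_2,x_3)$, where $F$ is the defining homogeneous cubic of $X$. Smoothness of $X$ at $P$ gives $\nabla f(0) \neq 0$, and the tangent plane $T_P X$ restricts in this chart to the hyperplane cut out by the linear form $df_P = \sum_{i=1}^{3} \partial_i f(0)\, x_i$. Since $P \in \Pi$ and $\Pi \not\subset X$, the plane $\Pi$ is cut out by a nonzero linear form $\ell$ vanishing at the origin, and the restriction $f|_\Pi$ is not identically zero, so $C = X \cap \Pi$ is a genuine plane curve at $P$.

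Next I would pick a linear parametrization $\varphi\colon \mathbb{A}^2 \to \Pi$ with $\varphi(0) = P$, so that $C$ is cut out on $\mathbb{A}^2$ by $g := f \circ \varphi$. Since $g(0) = 0$, the chain rule gives that the linear part of $g$ at $0$ is exactly $df_P \circ \varphi$, a linear functional on $\mathbb{A}^2$. By definition, $C$ is singular at $P$ precisely when this linear part vanishes, i.e.\ when $df_P$ vanishes identically on $\mathrm{Im}(\varphi) = \Pi$. This in turn is equivalent to $\Pi \subseteq \ker(df_P) = T_P X$, and since both are two-dimensional linear subspaces through $P$, the inclusion is automatically an equality. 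Thus $\Pi = T_P X$ if and only if $C$ is singular at $P$.

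The main obstacle—a minor one—is being careful with the scheme-theoretic set-up: one has to ensure that $f|_\Pi$ is not the zero polynomial, which is precisely the role of the hypothesis $\Pi \not\subset X$, and one has to recognize that the Jacobian criterion applied to the plane curve $V(g) \subset \mathbb{A}^2$ at the origin literally asks for the vanishing of $\partial_1 g(0)$ and $\partial_2 g(0)$, i.e.\ for the linear part of $g$ to be zero. Once these identifications are clean, no further calculation is needed beyond linear algebra.
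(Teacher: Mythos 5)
Your proof is correct. The paper deliberately omits proofs of these preparatory lemmas, stating only that they ``follow mostly from straightforward calculations,'' and your local-coordinate argument---identifying the linear part of $g = f\circ\varphi$ with $df_P$ restricted to $\Pi$ and reading off both conditions from its vanishing---is precisely the straightforward calculation intended, with the role of the hypothesis $\Pi \not\subset X$ correctly isolated.
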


\begin{lemma} \label{lemma:union-three-lines}
	Let $S$ be a smooth cubic surface, $P \in S$ a point, and $C = S \cap T_P S$ a curve. Then $C$ is either a singular irreducible cubic, the union of a line and a conic, or the union of three distinct lines. 
\end{lemma}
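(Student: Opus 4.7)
The curve $C = S \cap T_PS$ is a plane cubic in $T_PS \simeq \mathbb{P}^2$, since $T_PS$ is not contained in $S$ (the surface $S$ is irreducible of degree $3$). By Lemma~\ref{lemma:singular-point}, $C$ is singular at~$P$.

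The plan is to combine the decomposition of a degree-$3$ divisor in $\mathbb{P}^2$ into irreducible factors with a rigidity argument coming from the smoothness of $S$. Any plane cubic decomposes in one of the following ways: (a) an irreducible cubic, (b) the union of a line and an irreducible conic, or (c) a sum of three lines counted with multiplicity. Since $C$ is singular, case (a) immediately yields a singular irreducible cubic and case (b) yields the "line and conic" alternative. In case (c) one must still verify that the three lines are distinct, equivalently, that no line appears in $C$ with multiplicity $\geq 2$; this is what I expect to be the main obstacle.

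To rule out a repeated linear component, suppose for contradiction that a line $L \subset T_PS$ appears in $C$ with multiplicity at least $2$. Then every point of $L$ is a singular point of $C$, and applying Lemma~\ref{lemma:singular-point} at each $Q \in L$ gives $T_QS = T_PS$. That is, the tangent plane of $S$ would be constant along $L$. Choose homogeneous coordinates so that $T_PS = \{x_3 = 0\}$ and $L = \{x_2 = x_3 = 0\}$, and write $S = V(f)$ with $f$ a homogeneous cubic. The constancy condition forces $\partial_{x_0}f$, $\partial_{x_1}f$, $\partial_{x_2}f$ to vanish identically on $L$, and the tangent plane equation at a point of $L$ becomes a multiple of $\partial_{x_3}f \cdot x_3$.

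Now $\partial_{x_3}f$ restricted to $L$ is a binary form of degree $2$ in $x_0, x_1$. Over an algebraically closed field it either vanishes identically on $L$ or it has a root $Q_0 \in L$; in both scenarios there is a point of $L$ at which all four partials of $f$ vanish, contradicting the smoothness of $S$. This excludes repeated components in case (c), so the three lines obtained are distinct, and the classification is complete.
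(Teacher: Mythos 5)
Your proof is correct. The paper itself does not supply an argument for this lemma (it explicitly omits the proofs of the preparatory lemmas as ``straightforward calculations''), so there is no official proof to compare against; but your write-up fills the gap cleanly. The case analysis is exhaustive: a plane cubic is an irreducible cubic, a line plus an irreducible conic, or a product of three linear forms, and the only real content is excluding a repeated linear factor. Your mechanism for that --- a double line $L$ in $C$ forces $C$ to be singular along all of $L$, hence by Lemma~\ref{lemma:singular-point} the tangent plane of $S$ is constant along $L$, hence $\partial_{x_0}f,\partial_{x_1}f,\partial_{x_2}f$ vanish identically on $L$ while $\partial_{x_3}f\vert_L$ is a binary quadric that must vanish somewhere over an algebraically closed field --- is exactly the standard argument, and every step checks out (if $g=\ell^2h$ cuts out $C$ in the plane, then $\nabla g$ vanishes on $V(\ell)$, so the singularity claim is legitimate). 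Two minor remarks: you should also record that $C$ is singular at $P$ only to dispose of case (a), i.e.\ to rule out a \emph{smooth} irreducible cubic, which you do; and note that your tangent-plane argument also directly excludes the triple-line case $3L$, so no separate treatment is needed there. Nothing is missing.
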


We are interested in studying the tangent lines to a surface $X$ passing through some point $A$ outside $X$. For that purpose we introduce the following object:

\begin{dfn}
	Given a point $A = [a_0 : a_1 : a_2 : a_3] \in \mathbb P^3$ and $X = V(f) \subset \mathbb P^3$   a surface, the \emph{first polar} with respect to $A$ is the surface in $\mathbb P^3$ defined as the zero set of the polynomial:
	\begin{align}
		f_A := a_0 \frac {\partial f} {\partial x_0}  + a_1 \frac {\partial f} {\partial x_1} + a_2 \frac {\partial f} {\partial x_2} + a_3 \frac {\partial f} {\partial x_3}.
	\end{align}
\end{dfn}

Geometrically, we have that $P  \in V(f) \cap V(f_A)$ if and only if the line $\overline{PA}$ is tangent to $V(f)$ at $P$. For $\deg f = 3$ the first polar appears naturally in the following Taylor expansion, where $[\lambda: \mu] \in \mathbb P^1$ parametrizes the line $\overline{PA}$:
\begin{align} \label{eq:taylor}
	f(\lambda P + \mu A) = \lambda^3 f(P) + \lambda^2 \mu \cdot f_A(P) + \lambda \mu^2 f_P(A) + \mu^3 f(A).
\end{align}
If $\overline{PA}$ is tangent to $S = V(f)$ at $P$ and at $A$, then the right hand side of Equation~\eqref{eq:taylor} is identically zero, namely $f(\lambda P + \mu A) = 0$, so $\overline{PA} \subset S$. This gives:

\begin{lemma} \label{lemma:touch-two-points}
	Let $S$ be a smooth cubic surface. If a line $L$ is tangent to $S$ in two distinct points then $L \subset S$.
\end{lemma}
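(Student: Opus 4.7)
The plan is to read off the conclusion directly from the Taylor expansion in Equation~\eqref{eq:taylor}, which was displayed immediately before the statement of the lemma. Let $P$ and $A$ be the two distinct points of $L$ at which $L$ is tangent to $S$, and parametrize $L$ by $[\lambda:\mu]\in\mathbb P^1 \mapsto \lambda P + \mu A$. Showing $L\subset S$ is equivalent to showing that $f(\lambda P+\mu A)$ vanishes identically as a binary cubic form in $(\lambda,\mu)$, and Equation~\eqref{eq:taylor} exhibits its four coefficients as $f(P)$, $f_A(P)$, $f_P(A)$, and $f(A)$. So the entire task is to show each of these four coefficients is zero.

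First, $f(P)=f(A)=0$ simply because $P,A\in S$. For the two cross terms, I would use the defining property of the tangent plane: a line $L$ through a smooth point $P\in S$ is tangent to $S$ at $P$ if and only if $L\subseteq T_P S$, and from the formula for $T_P S$ given just after Lemma~\ref{lemma:singular-point}, the condition $A\in T_P S$ reads $\sum_{i=0}^{3} a_i\, \tfrac{\partial f}{\partial x_i}(P)=0$, which is exactly $f_A(P)=0$. Swapping the roles of $P$ and $A$, tangency at $A$ gives $f_P(A)=0$. At this point all four coefficients in \eqref{eq:taylor} vanish, the restriction of $f$ to $L$ is identically zero, and hence $L\subset S$.

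There is no real obstacle here; the only thing to be careful about is the translation between the geometric statement ``$L$ is tangent to $S$ at $P$'' and the algebraic condition ``$A\in T_P S$,'' which relies on $S$ being smooth at $P$ (so that $T_P S$ is an honest plane, not all of $\mathbb P^3$) — and smoothness is part of the hypothesis. Note also that the argument uses nothing beyond $\deg f = 3$: it is precisely the cubic degree that makes the Taylor expansion have exactly four coefficients, all of which are killed by the tangency data at two points.
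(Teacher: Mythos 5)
Your proof is correct and is essentially the paper's own argument: the paper derives this lemma directly from the Taylor expansion in Equation~\eqref{eq:taylor}, observing that tangency at both $P$ and $A$ forces all four coefficients to vanish. You have merely spelled out the vanishing of each coefficient (via $P,A\in S$ and the tangent-plane conditions $f_A(P)=f_P(A)=0$) in more detail than the paper does.
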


If $X = V(f)$, we denote by $X_A$ the first polar $V(f_A)$. The main construction to prove the existence of lines on $S$
is the cone over the algebraic curves $S\cap S_A$ with vertex $A$.
\begin{dfn}
	The \emph{tangent cone} to $X$ with vertex $A$ is the set of lines passing through the point $A$ and tangent to $X$, namely 
	\begin{align}
		\cone A X := \bigcup_{P \in X \cap X_A} \overline{PA}.
	\end{align}
\end{dfn}

\begin{lemma} \label{lemma:degree-cone}
	Let $X = V(f)$, $n = \deg f$, and $A \in \mathbb P^3$ a point not in $X$. Then $\cone A X$ is an algebraic surface of degree $n(n-1)$.
\end{lemma}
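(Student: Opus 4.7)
The plan is to apply B\'ezout's theorem to the polar curve $C := X \cap X_A$ and then translate its degree into that of $\cone{A}{X}$ by a line-intersection argument inside a well-chosen plane.

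First I verify that $X_A$ is an honest surface of degree $n-1$. Each partial derivative $\partial f / \partial x_i$ is homogeneous of degree $n-1$, hence so is $f_A$. Euler's identity $\sum_i x_i\, \partial f / \partial x_i = n f$ evaluated at $A$ gives $f_A(A) = n f(A)$, which is nonzero since $A \notin X$. Thus $f_A$ is nonzero of degree exactly $n-1$, and moreover $A \notin X_A$, so $A \notin C$. Smoothness of $X$ makes $f$ irreducible, and since $\deg f_A = n-1 < n$ we have $f \nmid f_A$; so B\'ezout gives $\deg C = n(n-1)$.

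Next I compute $\deg \cone{A}{X}$ by intersecting with a generic line $\ell \subset \mathbb{P}^3$. Let $\Pi := \langle A, \ell \rangle$ be the plane they span. Inside $\Pi \cong \mathbb{P}^2$, the plane curves $\Pi \cap X$ and $\Pi \cap X_A$ have degrees $n$ and $n-1$, hence they meet in $n(n-1)$ points by B\'ezout, which are precisely $\Pi \cap C$. For each such $P$, the chord $\overline{AP}$ lies in $\Pi$, and since $\ell \subset \Pi$ as well, the two coplanar lines $\overline{AP}$ and $\ell$ meet in a unique point; this point lies in $\ell \cap \cone{A}{X}$. Counting contributions produces $n(n-1)$ intersection points.

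The main obstacle is checking that this count actually equals the degree of the cone, i.e. that for a generic $\ell$ the $n(n-1)$ constructed points are distinct and appear with multiplicity one. Degenerate configurations, such as two points of $\Pi \cap C$ lying on a common chord through $A$ or $\ell$ being tangent to $\cone{A}{X}$ at one of the intersection points, cut out a proper closed subset of the Grassmannian of lines in $\mathbb{P}^3$, so they are avoided by a generic choice of $\ell$.
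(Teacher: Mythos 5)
Your proof is correct and follows essentially the same route as the paper: both arguments pass to a general plane $\Pi$ through $A$, use B\'ezout in that plane to find $n(n-1)$ points of $X \cap X_A \cap \Pi$, and identify $\cone{A}{X} \cap \Pi$ with the union of the $n(n-1)$ chords through $A$ and those points. You add some worthwhile bookkeeping the paper omits (Euler's identity to show $A \notin X_A$ and $f_A \not\equiv 0$, and the genericity check on multiplicities), but the underlying idea is the same.
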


	To see geometrically that the degree of $\cone A X$ is $n(n-1)$, let $\Pi$ be a general plane through $A$. The plane curves $X \cap \Pi$ and $X_A \cap \Pi$ have degrees $n$ and $(n-1)$ respectively, so by B\'ezout the set $X \cap X_A \cap \Pi$ has $n(n-1)$ points. Hence, the curve $\cone A X \cap \Pi$ consists of the $n(n-1)$ lines containing $A$ and the points of $X \cap X_A \cap \Pi$, so it has degree $n(n-1)$, giving the degree of $\cone A X$.

Now comes the ingenious idea of Cayley and Salmon for finding lines in a surface $S$ of degree~3. Let $A \not \in S$ and $\tilde \Pi$ be a plane with $A \not \in \tilde \Pi$. The idea is to set up a chain of bijections between the lines on $S$ and lines in $\tilde \Pi$ that are tangent to the curve $\cone A S \cap \tilde \Pi$ at two points. These lines are called \emph{bitangent} lines and can be counted  using  \emph{Pl\"ucker formulas}. The chain of bijections is: 
\[\left\{\parbox[c]{1.5cm} {\centering lines in $S$} \right\}
\longleftrightarrow
\left\{\parbox[c]{2cm} {\centering \small $\Pi$ tangent to~$S$ at two points in $S \cap S_A$} \right\}
 \longleftrightarrow
\left\{\parbox[c]{1.8cm} {\centering \small $\Pi$ tangent to~$\cone A S$ at two points in $S \cap S_A$ } \right\}
\longleftrightarrow
\left\{\parbox[c]{2cm} {\centering \small bitangents to $\cone A S \cap \tilde \Pi$} \right\}.
\]

\begin{exa} \label{example:clebsch}
	We exemplify the bijection between lines in $S$ and bitangents to $\cone A S \cap \tilde \Pi$ with a remarkable smooth cubic surface whose 27 lines are real, the \emph{Clebsch} surface. We take the Clebsch to be given by the polynomial:
	\begin{align*} f &= (x+\sqrt 3y+z/4)^3 + (x - \sqrt 3y + z/4)^3 \\&+ (-2x + z/4)^3 + (3z/4 + w)^3 - (3z/2 + w)^3.
	\end{align*}
	For $A$ and $\tilde \Pi$ we take the point $(0,0,-1/2,1)$ and the plane $x_2 = 0$, respectively.
	
	\vspace{1em}
	
	\noindent
	\begin{minipage}{0.35\textwidth}
		\centering
		\includegraphics[scale=0.12]{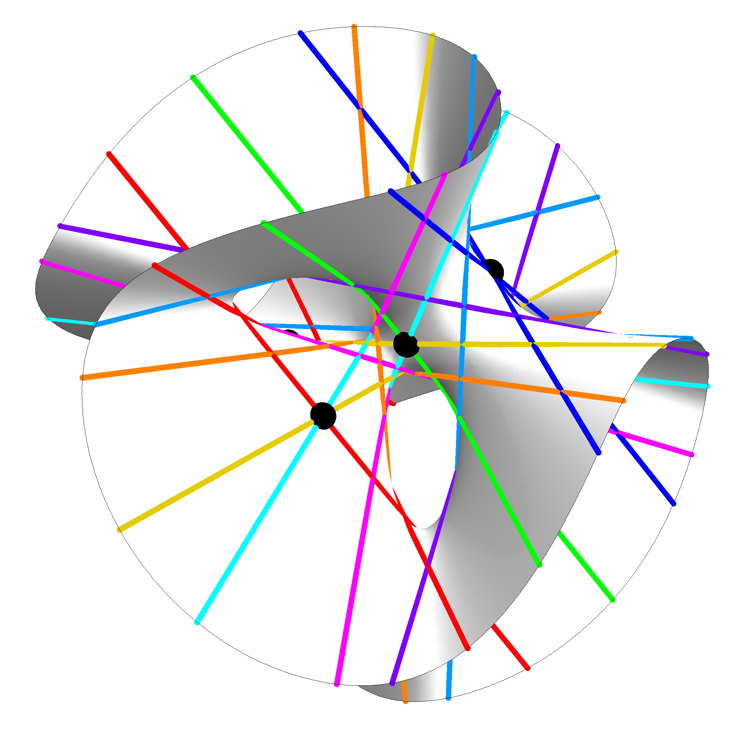}
		
	\end{minipage}
	\begin{minipage}{0.3\textwidth}
		\centering
		\includegraphics[scale=0.3]{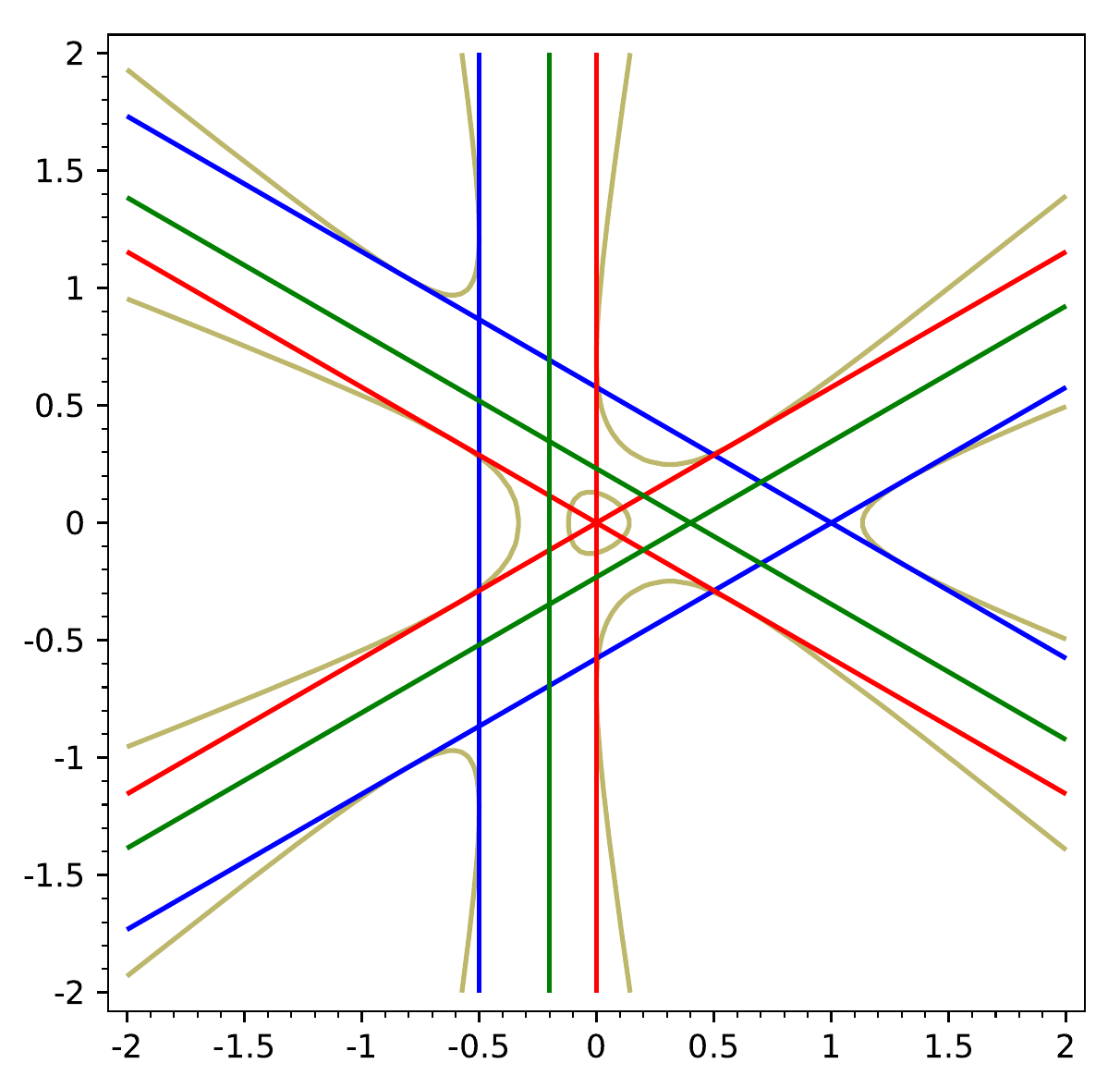}
	\end{minipage}
	\begin{minipage}{0.3\textwidth}
		\centering
		\includegraphics[scale=0.3]{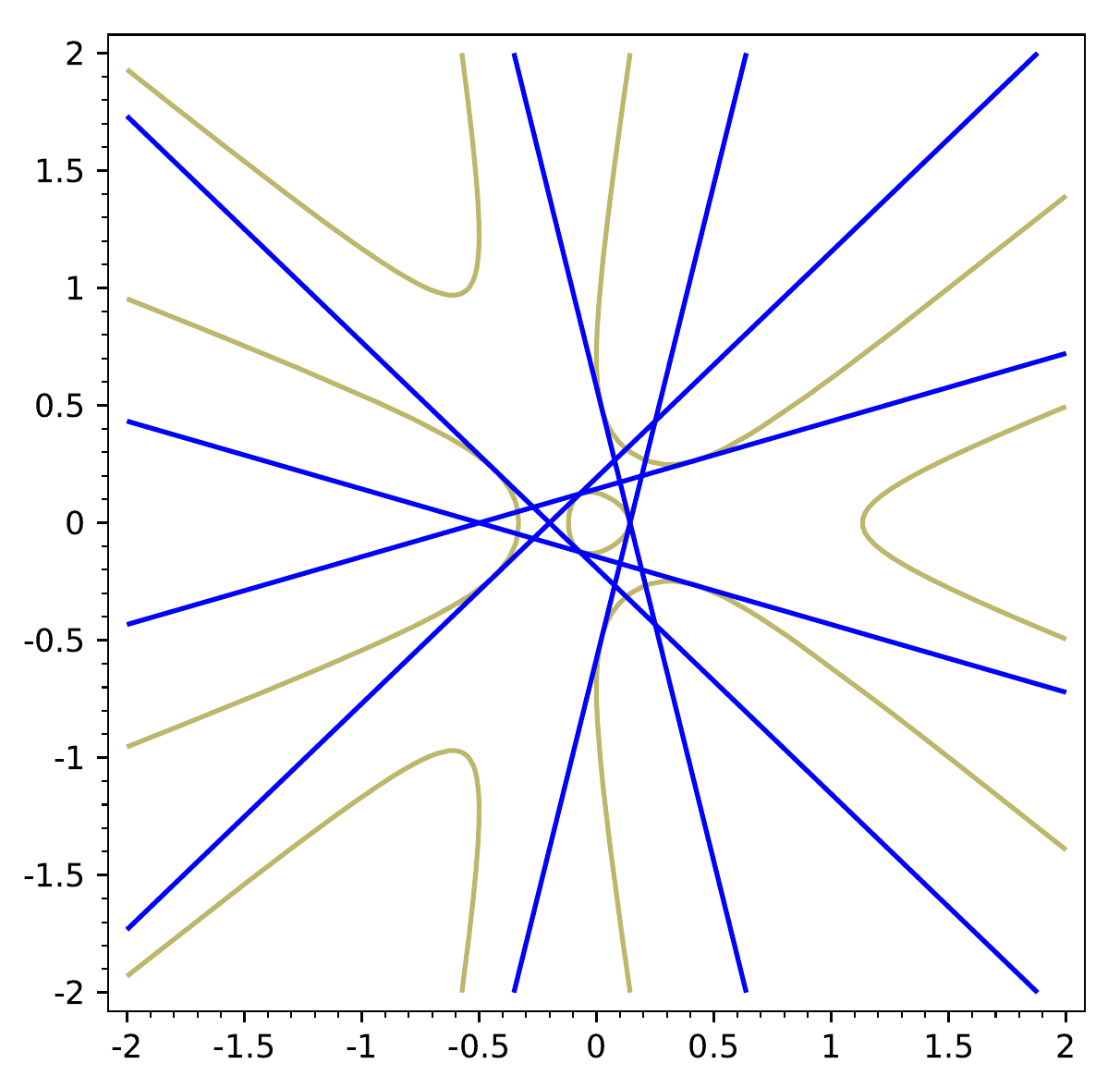}
	\end{minipage}

	\noindent
\begin{minipage}{0.35\textwidth}
	\centering
	\noindent \scriptsize Image by Greg Egan, see \cite{greg}.
\end{minipage}
\begin{minipage}{0.3\textwidth}
	\centering
\end{minipage}
\begin{minipage}{0.3\textwidth}
	\centering
\end{minipage}
		\vspace{0.7em}

	\noindent
\begin{minipage}{0.35\textwidth}
	\centering
	
	\includegraphics[scale=0.2]{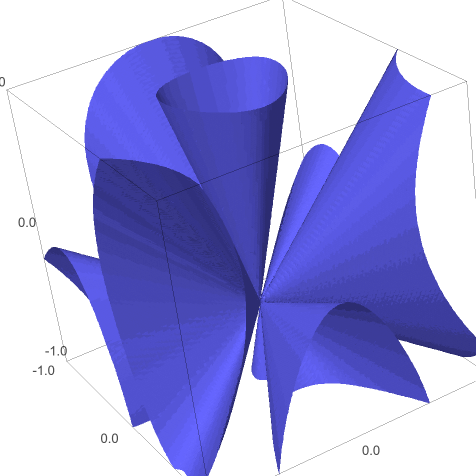}
	\vspace{0.3em}

\end{minipage}
\begin{minipage}{0.3\textwidth}
	\centering

	\includegraphics[scale=0.3]{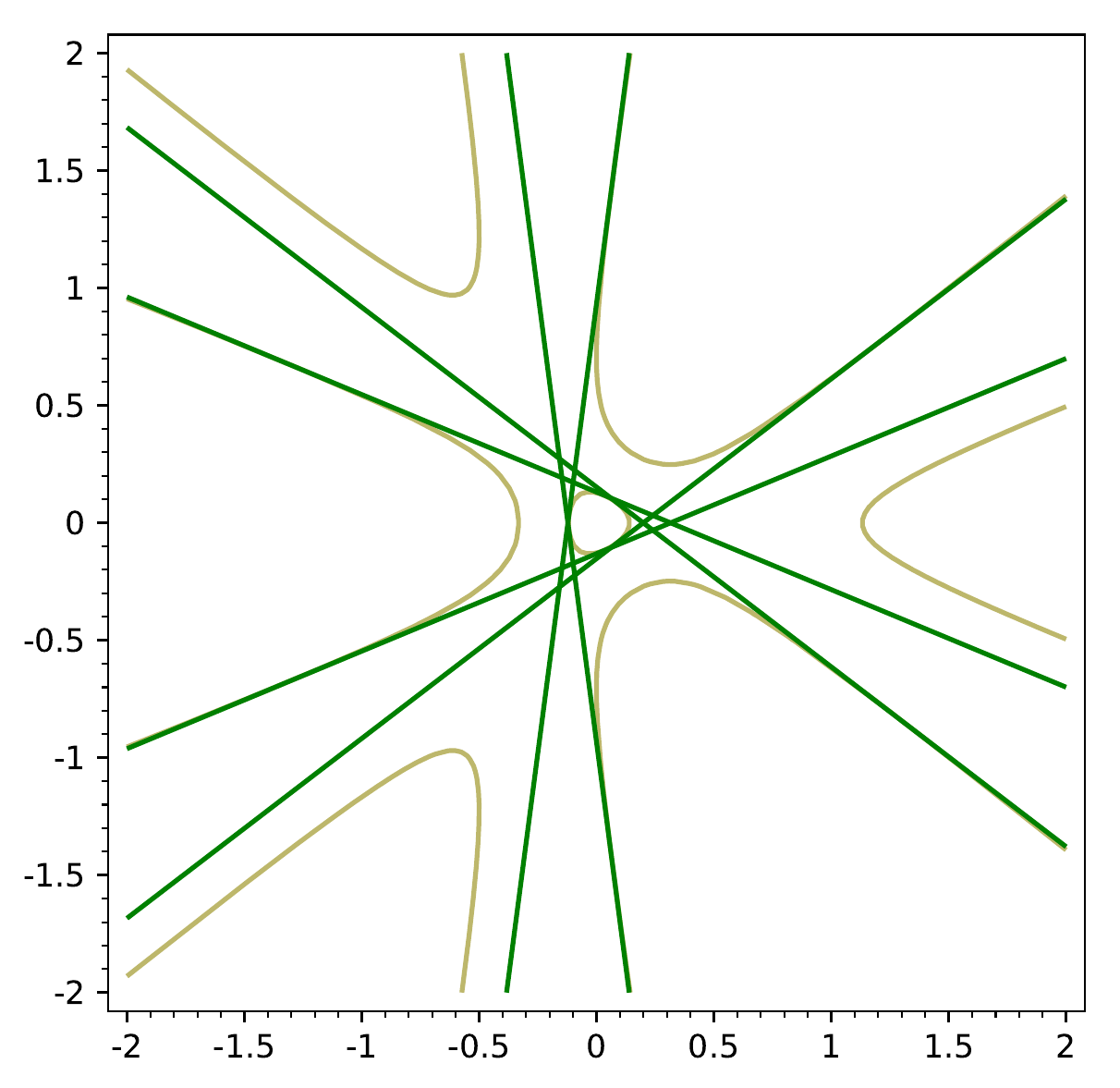}
\end{minipage}
\begin{minipage}{0.3\textwidth}
	\centering	
	\includegraphics[scale=0.3]{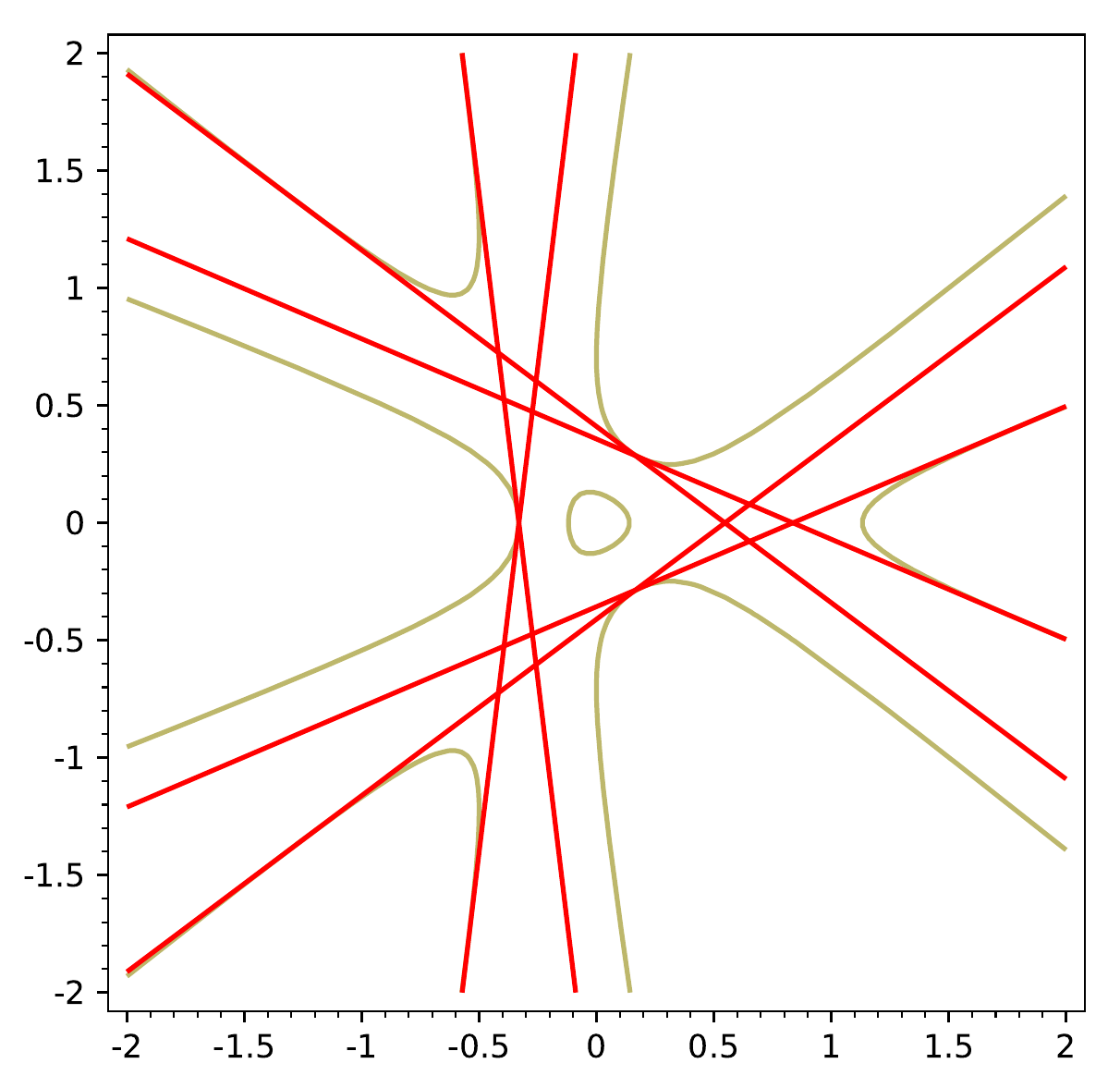}
\end{minipage}

\noindent \begin{minipage}{0.35\textwidth} \center	\noindent \scriptsize The surface $\cone A S$. \end{minipage}
\begin{minipage}{0.6\textwidth}
	\centering
	\noindent \scriptsize 27 bitangents to the curve $\cone A S \cap \tilde \Pi$.
\end{minipage}
\vspace{\baselineskip}

The code for calculating $\cone A S$, $\cone A S \cap \tilde \Pi$, and the images above can be found at the supplementary website in the file \texttt{Cone-Clebsch}.

\end{exa}

We now state the bijections. For the remainder of this subsection, $S \subset \mathbb P^3$ is a smooth cubic surface and $A$ in $\mathbb P^3$ is a point outside $S$.

\begin{lemma} \label{lemma:bijection1}
	Let $L \subset \mathbb P^3$ be a line. Then $L \subset S$ if and only if there exists $P_1, P_2$ in $L\cap S \cap S_A$ such that $T_{P_1} S = T_{P_2} S$.
\end{lemma}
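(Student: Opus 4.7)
The plan is to translate both sides of the equivalence into statements about tangent planes, using the geometric meaning of the first polar: a point $P$ lies in $S \cap S_A$ if and only if the line $\overline{AP}$ is tangent to $S$ at $P$, which (since $A \neq P$) is the same as saying $A \in T_P S$. When a line $L$ lies on $S$, the tangent plane $T_P S$ automatically contains $L$ for every $P \in L$, so it belongs to the pencil of planes through $L$; the extra requirement $A \in T_P S$ then pins $T_P S$ down to the single plane $\Pi := \overline{L, A}$. This dictionary converts the lemma into a statement about plane cubic curves.

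For the forward direction, first note that $A \notin L$ (otherwise $A \in L \subset S$), so $\Pi$ is a well-defined plane. The intersection $\Pi \cap S$ is a plane cubic containing $L$ as a component, hence splits as $L \cup C'$ with $C'$ a conic in $\Pi$. By B\'ezout in $\Pi$, $L$ and $C'$ meet in two points $P_1, P_2$ (counted with multiplicity), and at each of them the curve $L \cup C'$ is singular. Applying Lemma~\ref{lemma:singular-point} to the pair $(S, \Pi)$ yields $T_{P_i} S = \Pi$; since $A \in \Pi = T_{P_i} S$ the line $\overline{AP_i}$ is tangent to $S$ at $P_i$, and so $P_i \in L \cap S \cap S_A$ with $T_{P_1} S = T_{P_2} S$.

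For the reverse direction, suppose we are given distinct $P_1, P_2 \in L \cap S \cap S_A$ with common tangent plane $\Pi := T_{P_1} S = T_{P_2} S$. Two distinct points on $L$ lying in $\Pi$ force $L \subset \Pi$. Let $C := \Pi \cap S$; since $\Pi \not\subset S$, this is a plane cubic in $\Pi$, and by the other direction of Lemma~\ref{lemma:singular-point} it is singular at both $P_1$ and $P_2$. The crux is the elementary observation that a plane cubic cannot have two distinct singular points unless it contains the line joining them: if $L$ were not a component of $C$, B\'ezout in $\Pi$ would give $L \cdot C = 3$, whereas each singularity contributes intersection multiplicity at least $2$, already totalling $4$. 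Hence $L \subset C \subset S$.

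The main subtlety is interpretative rather than technical: the reverse direction genuinely needs $P_1 \neq P_2$ (otherwise the hypothesis $T_{P_1} S = T_{P_2} S$ is empty), and in the forward direction one should read the two points as the length-$2$ scheme $L \cap C'$, which may happen to be non-reduced for special positions of $A$. Beyond this nothing deeper than Lemma~\ref{lemma:singular-point} and B\'ezout in a plane is required.
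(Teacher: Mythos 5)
Your proof is correct, and its skeleton matches the paper's. The forward direction is the same argument: both pass to the plane $\Pi$ spanned by $L$ and $A$, locate two singular points of the plane section lying on $L$ (the paper by invoking Lemma~\ref{lemma:union-three-lines}, you by the explicit splitting $L\cup C'$ and B\'ezout in $\Pi$), and then apply Lemma~\ref{lemma:singular-point} together with $A\in\Pi$ to conclude $P_i\in S_A$. For the converse the paper simply cites Lemma~\ref{lemma:touch-two-points}, whose proof is the Taylor expansion~\eqref{eq:taylor} showing that $f$ restricted to $\overline{P_1P_2}$ is a binary cubic with two double roots, hence identically zero; your count $L\cdot C\geq 2+2>3$ inside the plane section is the same degree argument packaged in $\Pi$ rather than on the line, so nothing essential changes. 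Your closing observation --- that the converse genuinely needs $P_1\neq P_2$, and that in the forward direction the scheme $L\cap C'$ can be non-reduced for special positions of $A$ --- identifies a real imprecision in the statement that the paper's one-line proof passes over silently.
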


Lemma~\ref{lemma:bijection1} follows from geometric considerations. Assume that $L \subset S$. Let $\Pi$ be the plane containing $L$ and $A$. By Lemma~\ref{lemma:union-three-lines} the curve $S \cap \Pi$ is singular at two points $P_1$, $P_2$ in $L$. Lemma~\ref{lemma:singular-point} implies that $\Pi = T_{P_1} S = T_{P_2} S$ and since $A \in \Pi$ we have that $P_1, P_2$ are also in $S_A$. The converse is Lemma~\ref{lemma:touch-two-points}.

\begin{lemma} \label{lemma:bijection2}
	Let $P_1, P_2$ in $S \cap S_A$ be distinct points, and $\Pi$ a plane containing~$P_1, P_2$. Then $\Pi = T_{P_1} S = T_{P_2} S$ if and only if  $\;\Pi = T_{P_1} \cone A S = T_{P_2} \cone A S$.
\end{lemma}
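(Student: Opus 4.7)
The plan is to reduce Lemma~\ref{lemma:bijection2} to the pointwise tangent space identity $T_P S = T_P \cone A S$ at any smooth point $P$ of $\cone A S$ lying on $S \cap S_A$; applying this at $P = P_1$ and $P = P_2$ then makes the biconditional immediate.

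To establish the pointwise equality, I would exhibit two distinct lines through $P$ common to both 2-planes, namely the ruling $\overline{PA}$ of the cone and the tangent line $T_P(S \cap S_A)$ of the base curve. That $\overline{PA} \subset T_P S$ follows by unpacking the definitions: the condition $P \in S_A$ says $f_A(P) = 0$, which is exactly the statement that $A$ satisfies the linear equation defining $T_P S$, so $A \in T_P S$ and hence $\overline{PA} \subset T_P S$. On the other side, $\overline{PA}$ is by definition a ruling of $\cone A S$ through $P$, so it lies in the tangent plane at any smooth point. For the tangent line to $S \cap S_A$, the inclusion into $T_P S$ is trivial since $S \cap S_A \subset S$, while for the cone I would differentiate the parametrization $(P', t) \mapsto tP' + (1-t)A$ with $P' \in S \cap S_A$ at $(P,1)$, whose image is precisely the span of $\overline{PA}$ and $T_P(S \cap S_A)$; this immediately gives the desired inclusion and, at a smooth point, forces $T_P \cone A S$ to equal this span.

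The remaining issue, which I expect to be the main obstacle, is distinctness of these two lines. Their coincidence would mean that $\overline{PA}$ is also tangent to the polar $S_A$ at $P$, i.e.\ $A \in T_P S_A$, which is a proper closed condition on the pair $(A,P)$. For a suitably generic choice of $A \in \mathbb P^3 \setminus S$ the non-degenerate condition therefore holds at every $P \in S \cap S_A$ relevant to the bijection in Lemma~\ref{lemma:bijection1}, and the points $P_1, P_2$ of interest can be assumed to be smooth points of $\cone A S$ for the same genericity reason. Under this genericity the two lines span a unique 2-plane, which must simultaneously equal $T_P S$ and $T_P \cone A S$, completing the reduction and hence the lemma.
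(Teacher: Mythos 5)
Your argument is correct and is essentially the proof the paper has in mind: the paper's one-sentence sketch for this lemma is exactly your key step, namely that $P_i \in S_A$ gives $A \in T_{P_i} S$, so the ruling $\overline{P_iA}$ lies in both $T_{P_i}S$ and $\cone A S$, after which the two tangent planes are identified as the span of that ruling and the tangent line of the curve $S \cap S_A$. The degenerate case you flag (the ruling coinciding with that tangent line, i.e.\ $\overline{P_iA}$ being an inflectional tangent of $S$ at $P_i$) is a genuine subtlety that the paper does not address either, and it is harmlessly excluded for generic $A$ as you say, so your write-up fills in the paper's sketch rather than departing from it.
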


Lemma~\ref{lemma:bijection2} follows from a calculation.
The main geometrical idea is that $P_1, P_2$ in $S_A$ implies that $T_{P_1} S$, $T_{P_2} S$ contains the lines $\overline{P_1A}$, $\overline{P_2A}$, respectively. These lines are also contained in $\cone A S$.

Before going into the last bijection, we motivate it with the Pl\"ucker formula for calculating bitangents. One of the numbers involved in this formula is the \emph{class} of a plane curve $C$, which is the number of tangents to $C$ that contain a point $B$ not in $C$.

\begin{lemma}
	Let $C$ be a plane curve with at worse nodes and ordinary cusps as singularities. Denote by $\tau$, $\delta$, $\nu$, $\mu$ the number of bitangents, number of nodes, the degree and the class of $C$, respectively. Then
	\begin{align} \label{eq:pluecker}
	2\tau - 2 \delta = (\mu-\nu)(\mu+\nu-9).
	\end{align}
\end{lemma}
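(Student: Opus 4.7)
The plan is to derive \eqref{eq:pluecker} from the three classical Pl\"ucker relations and a short algebraic manipulation. Although the statement mentions only $\nu,\mu,\delta,\tau$, I would introduce two auxiliary invariants for the derivation: the number $\kappa$ of ordinary cusps of $C$ and the number $\iota$ of its simple inflection points. These drop out of the final identity. The three inputs are the degree-class formula
\[\mu=\nu(\nu-1)-2\delta-3\kappa,\]
its dual $\nu=\mu(\mu-1)-2\tau-3\iota$, and the inflection formula $\iota=3\nu(\nu-2)-6\delta-8\kappa$.

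I would prove the degree-class formula by repeating, one dimension lower, the polar-and-B\'ezout argument already used in the proof sketch of Lemma~\ref{lemma:degree-cone}. Pick a general point $B\notin C$, form the first polar $C_B$ (a plane curve of degree $\nu-1$), and compute $C\cap C_B$ by B\'ezout, obtaining total length $\nu(\nu-1)$. Geometrically, the smooth points of $C\cap C_B$ are exactly the contact points of tangents from $B$ to $C$, of which there are $\mu$ by definition of the class, while a local computation assigns contribution $2$ to each ordinary node and $3$ to each ordinary cusp. The second formula is the same statement applied to the dual curve $C^*\subset(\mathbb P^2)^*$, combined with biduality $(C^*)^*=C$ and the dictionary under which the nodes and cusps of $C^*$ are respectively the bitangents and inflections of $C$. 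The inflection formula follows from a parallel argument using the Hessian curve of $C$ (of degree $3(\nu-2)$), whose smooth intersections with $C$ are the inflection points and whose local intersection numbers at a node and at a cusp are $6$ and $8$ respectively.

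With the three ingredients in hand, the proof of \eqref{eq:pluecker} is mechanical: solve the two degree-class formulas for $2\delta$ and $2\tau$, subtract to obtain
\[2\tau-2\delta=\mu^2-\nu^2-3\iota+3\kappa,\]
and substitute both the inflection formula for $3\iota$ and the expression $18\delta=9\nu(\nu-1)-9\mu-27\kappa$ coming from $2\delta=\nu(\nu-1)-\mu-3\kappa$. All terms involving $\kappa$ and $\delta$ cancel, leaving $\mu^2-\nu^2-9(\mu-\nu)=(\mu-\nu)(\mu+\nu-9)$. The main obstacle is the local contribution count at singular points in each of the three input formulas: this requires an explicit expansion in local coordinates (or intersection-multiplicity computations in the local ring at a node and at an ordinary cusp), since no purely global or parameter-counting shortcut appears to do the job by itself.
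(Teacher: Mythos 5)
Your derivation is correct and follows essentially the same route as the paper, which simply cites the three classical Pl\"ucker relations (the degree--class formula, its dual, and the inflection formula) and states that \eqref{eq:pluecker} follows by eliminating variables --- precisely the elimination you carry out, and your algebra (including the local contributions $2,3$ for the polar and $6,8$ for the Hessian at nodes and cusps) checks out. The only difference is that you additionally sketch proofs of the three input relations via the first polar, the dual curve with biduality, and the Hessian, whereas the paper delegates these entirely to its references.
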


Equation~\eqref{eq:pluecker} may be obtained by eliminating variables from the equations 5.34, 5.35 and 5.36 of \cite[Chapter 5]{bcgm09}. See also Section 5.7 of \cite{fis01}.


\begin{lemma} 
	Let $\Pi \subset \mathbb P^3$ be a plane, $P$ in $S \cap S_A$ a point, and $Q$ in $\overline {PA}$. Then $\Pi$ is tangent to $\cone A S$ at $P$ if and only if $\Pi$ is tangent to $\cone A S$ at $Q$.
\end{lemma}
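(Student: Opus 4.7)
The plan is to exploit the defining property of a cone: it is a union of lines through its vertex, and consequently its tangent planes are constant along each ruling. Before invoking this principle I would note that the statement implicitly requires $\overline{PA}\subset \cone A S$; this holds because $P \in S \cap S_A$ means exactly that $\overline{PA}$ is tangent to $S$ at $P$, so $\overline{PA}$ is one of the rulings making up the tangent cone. Hence $P$ and $Q$ lie on the same ruling of $\cone A S$, and the content of the lemma is that the tangent plane does not vary along this ruling.

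To execute the argument I would choose projective coordinates so that $A = [0:0:0:1]$. Then projection from $A$ is $[x_0:x_1:x_2:x_3] \mapsto [x_0:x_1:x_2]$, and any cone with vertex $A$ is the preimage of its base curve in $\mathbb{P}^2$ under this projection. In particular, a defining polynomial $g$ of $\cone A S$ is homogeneous in $x_0, x_1, x_2$ only, so $\partial g / \partial x_3 \equiv 0$.

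The tangent plane to $V(g)$ at a smooth point $R = [r_0:r_1:r_2:r_3]$ is therefore cut out by $\sum_{i=0}^{3} x_i \, (\partial g / \partial x_i)(R) = 0$, and because $\partial g/\partial x_3 = 0$ while the remaining partials depend only on $(r_0, r_1, r_2)$, this equation is determined by $[r_0:r_1:r_2]$ alone. Any $Q \in \overline{PA}$ other than $A$ has the form $[p_0:p_1:p_2:q_3]$ in our coordinates, so $P$ and $Q$ share their first three projective coordinates and thus yield the same tangent plane.

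The only real subtlety I anticipate is smoothness: the vertex $A$ is always a singular point of $\cone A S$, so one must verify that $P$ and $Q$ are smooth points of the cone for $T_P \cone A S$ and $T_Q \cone A S$ to make sense. A cone with vertex $A$ is smooth at $R \neq A$ exactly when its base curve in $\mathbb{P}^2$ is smooth at the image of $R$ under the projection; this is satisfied for the generic points of $S \cap S_A$ that enter the bijections of Lemmas~\ref{lemma:bijection1} and~\ref{lemma:bijection2}, so one may safely restrict attention to such points.
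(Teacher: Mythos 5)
Your proof is correct. The paper itself gives no argument for this lemma (it is one of those the authors declare to ``follow mostly from straightforward calculations''), and your normalization $A=[0:0:0:1]$, the observation that the defining polynomial of the cone is then independent of $x_3$, and the resulting constancy of the gradient along each ruling is exactly the straightforward calculation being alluded to. Your cautionary remarks --- that $\overline{PA}$ is indeed a ruling because $P\in S\cap S_A$, that $Q$ must be taken distinct from the vertex $A$, and that tangency only makes sense at smooth points of the cone --- are appropriate and do not hide any gap.
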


\begin{lemma} \label{lemma:correspondence-with-curve}
	 Let $\Pi, \tilde \Pi \subset \mathbb P^3$ be planes with $A \not \in \tilde \Pi$, $C = \cone A S \cap \tilde \Pi$ a curve, $T = \Pi \cap \tilde \Pi$ a line, and $Q_1, Q_2$ in $T$ be points. Then:
		\begin{enumerate}
			\item $T$ is tangent to $C$ at $Q_i$ if and only if $\Pi$ is tangent to $\cone A S$ at $Q_i$.
			\item $T$ is a bitangent of $C$ at points $Q_1, Q_2$ if and only if $\Pi$ is tangent to $\cone A S$ at $P_1 := \overline{AQ_1} \cap S$ and $P_2 := \overline{AQ_2} \cap S$.
			\item A point $Q \in C$ is a node if and only if $\overline{AQ}$ is tangent to $S$ at two points.
		\end{enumerate}
\end{lemma}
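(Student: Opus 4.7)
The plan is to handle the three parts in order, with part (1) doing most of the geometric work; parts (2) and (3) will follow as structural consequences.

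For part (1), I would invoke the standard fact that if a plane $\tilde\Pi$ meets a surface $X\subset\mathbb P^3$ transversally at a smooth point $Q$, then the tangent line to the plane curve $X\cap\tilde\Pi$ at $Q$ equals $T_Q X\cap\tilde\Pi$. Applied to $X=\cone A S$, this yields the forward implication at once: if $\Pi=T_{Q_i}(\cone A S)$, then $T=\Pi\cap\tilde\Pi$ is precisely the tangent to $C$ at $Q_i$. For the converse, the crucial observation is that in the chain of bijections (Lemmas~\ref{lemma:bijection1} and~\ref{lemma:bijection2}) the plane $\Pi$ must contain $A$: it is a common tangent plane to $S$ at two points of $S\cap S_A$, and any tangent plane $T_P S$ with $P\in S_A$ contains $A$ by definition of the first polar. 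Meanwhile, the tangent plane $T_{Q_i}(\cone A S)$ to the cone at a smooth point also contains $A$, since it must contain the generator $\overline{AQ_i}$. Thus both $\Pi$ and $T_{Q_i}(\cone A S)$ contain the line $T$ and the external point $A\notin\tilde\Pi\supset T$, so they coincide.

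Part (2) then falls out by chaining part (1) with the preceding lemma, which asserts that tangency of a plane to $\cone A S$ is constant along each generator. Explicitly, $T$ is bitangent to $C$ at $Q_1,Q_2$ iff $T$ is tangent at each $Q_i$, iff (by part (1)) $\Pi$ is tangent to $\cone A S$ at each $Q_i$, iff (by the preceding lemma, since $P_i\in\overline{AQ_i}$) $\Pi$ is tangent to $\cone A S$ at each $P_i$.

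For part (3), I would analyze the local structure of the cone along its generators. The cone is parametrized by lines from $A$ through points of the sextic $S\cap S_A$, so the number of smooth local branches of $\cone A S$ meeting a generator $\overline{AQ}$ equals the number of points of $S\cap S_A$ lying on that line. A node of $C$ at $Q$ corresponds to two such smooth branches of the cone through $Q$, each cut transversally by $\tilde\Pi$; two preimages $P_1,P_2\in S\cap S_A$ on $\overline{AQ}$ translate directly into the statement that $\overline{AQ}$ is tangent to $S$ at each $P_i$. The main obstacle will be to verify rigorously that two preimages produce an \emph{ordinary} node of $C$---and not a tacnode, cusp, or worse---which demands a careful local analytic computation near the generator and uses the implicit genericity of $A$ and $\tilde\Pi$ so that the two branches of $\cone A S$ are smooth and meet $\tilde\Pi$ transversally in distinct tangent directions.
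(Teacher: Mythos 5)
The paper does not actually prove Lemma~\ref{lemma:correspondence-with-curve}: it belongs to the block of statements in Section~\ref{section:classical-way} that the authors explicitly leave unproved (``these follow mostly from straightforward calculations''), so there is no argument in the text to compare yours against. Your proposal is a sound way to fill that gap, and it makes two observations worth keeping. First, you correctly notice that part~(1) is false for an arbitrary plane $\Pi$ with $\Pi\cap\tilde\Pi=T$ --- any plane through a tangent line $T$ of $C$ would satisfy the left-hand side --- and that the implicit hypothesis rescuing the statement is $A\in\Pi$, which holds in the only situation where the lemma is used (there $\Pi=T_{P}S$ with $P\in S\cap S_A$, and $P\in S_A$ is precisely the condition $A\in T_PS$); combined with the fact that $T_{Q_i}(\cone A S)$ contains the generator $\overline{AQ_i}$ and hence $A$, while $A\notin T$, the two planes through $T$ and $A$ must coincide. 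Second, your part~(2) is exactly the intended chaining of part~(1) with the unnumbered lemma on constancy of tangency along generators. The one place your argument is not yet a proof is part~(3), and you say so yourself: identifying the local branches of $\cone A S$ along a generator with the points of $S\cap S_A$ on that generator, and checking that two branches give an \emph{ordinary} node of $C$ (rather than a tacnode or worse), requires the local computation you defer, together with the genericity of $A$ and $\tilde\Pi$ that the classical argument tacitly assumes. Since the Pl\"ucker count that follows the lemma only needs $\delta=0$ (no line through $A\notin S$ is tangent to $S$ at two points, by Lemma~\ref{lemma:touch-two-points}), this deferred verification does not affect the downstream use, but it should be completed if the lemma is to stand on its own.
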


Now we calculate the   Pl\"ucker formulas for the curve  $C$ defined in Lemma~\ref{lemma:correspondence-with-curve}. For the nodes, by Lemma~\ref{lemma:touch-two-points} if the line $L$ is tangent to $S$ at two points, then $A \not \in L$. Thus, part 3 of  Lemma~\ref{lemma:correspondence-with-curve} implies that $\delta = 0$. 
For the degree, by Lemma~\ref{lemma:degree-cone} we get that $\nu = n(n-1) = 6$. For the class, let $B \in \tilde \Pi \setminus C$ be a point. By part 1 of Lemma~\ref{lemma:correspondence-with-curve}  and Lemma~\ref{lemma:bijection2}, a tangent line to $C$ containing $B$ corresponds to a tangent plane to $S$ containing $A$ and $B$. Recall that if a tangent of $S$ contains $A$ then the point of contact lies in the first polar $S_A$, of degree $n-1$. Likewise for $B$. Thus the point of contacts are in the set $S \cap S_A \cap S_B$, which has $n \cdot (n-1) \cdot (n-1) = 12$ points. So $\mu = 12$. Plugging values in Equation~\eqref{eq:pluecker} gives $\tau = \frac 1 2 \cdot 6 \cdot 9 = 27$. 

Apparently we have found the 27 lines on $S$, but there is no evident way of certifying that this count  corresponds to distinct lines, or if some have to be counted with a multiplicity. Thus, we only conclude Theorem~\ref{thm:one-line}, and move on to a combinatorial argument in the next section.






\subsection{Salmon's construction of 27 lines}

Fix a smooth cubic surface $S$. We sketch another argument from \cite{cay49}, a combinatorial construction that Salmon communicated to Cayley in private correspondence. Cayley wrote it down and credited Salmon in the last paragraph of \cite{cay49}. The argument proves:

\begin{thm} \label{thm:twentyseven-lines}
	If there is a line $L$ in $S$, then there are 27 distinct lines in $S$.
\end{thm}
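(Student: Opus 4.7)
The plan is to produce the 27 lines from $L$ by iterating the tritangent plane construction used implicitly in Theorem~\ref{thm:one-line}, together with the combinatorial structure imposed by Lemma~\ref{lemma:union-three-lines}. First, I would consider the pencil of planes in $\mathbb{P}^3$ containing $L$; this is a $\mathbb{P}^1$. For each such plane $\Pi$, B\'ezout's theorem gives $\Pi\cap S=L\cup Q_\Pi$ with $Q_\Pi$ a residual plane conic. The condition that $Q_\Pi$ degenerates into a pair of lines is the vanishing of its discriminant, which is a degree-$5$ polynomial in the pencil parameter. Smoothness of $S$ rules out double roots and also rules out $Q_\Pi$ being a double line, so there are exactly $5$ tritangent planes $\Pi_1,\dots,\Pi_5$ through $L$. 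Each $\Pi_i$ decomposes as $L\cup a_i\cup b_i$, with $a_i\cap b_i$ a point off $L$ and $a_i\cap L$, $b_i\cap L$ distinct points on $L$. Together with $L$ this gives 11 lines.

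Next, I would record the key combinatorial input: by Lemma~\ref{lemma:union-three-lines}, any three distinct lines on $S$ that pairwise meet at distinct points are coplanar (and thus form a tritangent plane). This lemma, applied to triples drawn from $\{L,a_1,\dots,b_5\}$, forces $a_i\cap a_j=\emptyset$ and $b_i\cap b_j=\emptyset$ for $i\neq j$ (otherwise we would obtain a tritangent plane through $L$ other than $\Pi_1,\dots,\Pi_5$), and moreover for each pair $i\neq j$ the line $a_i$ must meet exactly one of $a_j,b_j$. After relabeling inside each pair $(a_i,b_i)$, I may assume $a_i$ meets $b_j$ for all $j\neq i$ and $a_i$ is skew to $a_j$. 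This is the beginning of a Schl\"afli double-six configuration.

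Now I would iterate the pencil construction on $a_1$: through $a_1$ there are again exactly $5$ tritangent planes; one is $\Pi_1=\{L,a_1,b_1\}$, and the remaining $4$ each contain $a_1$ together with a pair of other lines of $S$. Since $a_1$ is skew to $a_j$ and to $b_j$ for $j\geq 2$ by the intersection pattern just established, none of the $8$ ``other'' lines in these $4$ planes can lie among the $11$ already found, and the lemma ensures they are mutually distinct. This produces $8$ new lines, bringing the count to $19$. Repeating the construction with $a_2,a_3,a_4,a_5$ in turn and carefully tracking which lines coincide with lines already obtained from earlier iterations (using the same three-lines-coplanar lemma to identify the residual conic splits), one verifies that the numbers of genuinely new lines produced are $4$, $2$, $1$, $1$, yielding the total $11+8+4+2+1+1=27$ distinct lines. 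Finally, any putative $28$th line $M\subset S$ would have to meet some line among our $27$ (because any line on $S$ meets $10$ others), placing $M$ in an already-enumerated tritangent plane and giving a contradiction.

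The main obstacle is the combinatorial bookkeeping in the iteration: one must faithfully track, at each step, which pairs of previously found lines span a tritangent plane together with the new ``pivot'' line, to be sure that no new line is counted twice and that the five tritangent planes through each pivot are correctly assembled. The pencil/discriminant calculation and the coplanarity lemma are routine by comparison; the delicate point is the Schl\"afli-style intersection pattern that governs which of the $a_j,b_j$ reappears in each tritangent plane through $a_i$, and this is what makes the count land exactly at $27$.
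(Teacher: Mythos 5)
There is a genuine gap, and it sits exactly where you locate ``the main obstacle.'' Your pivotal combinatorial claim about the ten lines meeting $L$ is false: writing the five tritangent planes through $L$ as $L\cup a_i\cup b_i$, it is \emph{not} the case that $a_i$ meets exactly one of $a_j,b_j$ for $j\neq i$ (nor that one can relabel so that $a_i$ meets $b_j$ for all $j\neq i$). In fact both lines of the $i$-th residual pair are skew to both lines of the $j$-th pair; e.g.\ in the blow-up model with $L=E_1$ the pairs are $\{G_j,F_{1j}\}$, and $G_2$ is skew to both $G_3$ and $F_{13}$. Your own next paragraph quietly uses the correct pattern (``$a_1$ is skew to $a_j$ and to $b_j$ for $j\geq 2$''), contradicting the pattern you just ``established.'' More importantly, the asserted tallies $8,4,2,1,1$ of genuinely new lines are never verified, and they are not automatic: they depend on which member of each pair you iterate on. Choosing all five pivots to be the $G_j$'s yields $8,4,2,1,0$ and only $26$ lines (the line $G_1$, which is skew to $L$ and to every pivot, is never reached); one must mix the two halves of the pairs to catch it. So the bookkeeping you defer is precisely the content of the theorem, and the tool you propose for it is incorrectly stated. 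Finally, the closing step is circular: ``any line on $S$ meets $10$ others'' is a consequence of the completed classification, not something available beforehand. (The standard non-circular version: a $28$th line $M$ either meets $L$, hence lies in one of the five planes through $L$; or it is skew to $L$ and meets each plane $\Pi_i$ in a point of $a_i\cup b_i$, hence lies in a tritangent plane through one of those lines.)

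For comparison, the paper avoids all of this bookkeeping. It fixes a single tritangent plane $\Pi$ with $S\cap\Pi=L\cup L'\cup L''$, takes the four remaining tritangent planes through each of $L,L',L''$ ($12$ planes, each contributing $2$ lines distinct from $L,L',L''$), and rules out double counting by noting that a repeated line would be the intersection of two of these planes. This gives at least $3+12\cdot 2=27$ distinct lines, and the matching upper bound comes for free from the Pl\"ucker count $\tau=27$ of Section~\ref{section:classical-way}. If you want to salvage your iteration, you would need to (i) replace the false intersection pattern with the correct one (cross-pairs are skew), (ii) prove, not assert, which previously found lines reappear at each stage --- which essentially requires analyzing both Type~I and Type~II tritangent planes through the pivots --- and (iii) replace the circular final step. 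At that point the paper's shorter count is the easier road.
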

 
Fix a line $L \subset S$. A plane $\Pi$ through $L$ cuts $S$ in a cubic curve which consists of the union of $L$ with a conic~$C$. Since $S$ is smooth, by Lemma~\ref{lemma:union-three-lines} the plane $\Pi$ intersects $S$ in three distinct lines $L, L', L''$ if and only if $C$ is singular, if and only if certain determinant of a matrix which depends on the coefficients of $C$ vanishes. This determinant gives a degree 5 polynomial. It can be shown that the roots are distinct (see Proposition 7.3 in \cite{rei88}), hence we recover  5 special planes  through~$L$. These are called \textit{ tritangent} planes through $L$ since  any such plane is equal to the tangent plane to $S$ at the intersection point of a pair of lines chosen from~$L, L', L''$.

Fix a  tritangent plane $\Pi$ through $L$. Then $S \cap \Pi$ is the union of $L$ with another two lines, say $L'$ and $L''$.  Through each line $L, L', L''$ there are other 4 distinct tritangent planes. We get 12 tritangent planes, with 2 lines in each plane that are distinct from $L$, $L'$, $L'''$. This gives 24 lines if no double counting occurs. Suppose a line is counted twice. Then it is contained in the intersection of 2 out of the 12 planes we are considering. Thus it equals either $L, L'$, or $L''$, a contradiction.
Hence, there are at least $3+12 \cdot 2  = 27$ lines in $S$. Comparing this result with the one  in Section  \ref{section:classical-way} we then get that there are exactly $27$ lines in $S$.

Moreover since there are 5 tritangent planes per line, and each plane has exactly 3 lines, one also obtains that there exist $\frac{27 \cdot 5} 3 = 45$ tritangent planes.

\subsection{\cayleys~and \textit{Triederpaare }}\label{sec:2.5}

In the course of computing a particular example, Cayley also claims in \cite{cay49}  by \emph{``merely reckoning the number of arbitrary constants''} that $S$ can be  described as the zero set of a polynomial in the form of a \cayley:
\begin{align} \label{eq:steiner-normal}
	f = l_1l_2l_3 - m_1m_2m_3,
\end{align}
where the $l_i$ and $m_j$ are linear forms.  

In this subsection we investigate the properties of a smooth surface given by an $f$ as in Equation~\eqref{eq:steiner-normal}. The aim is to establish a natural bijection between each \cayley~and a special pair of triples of tritangent planes called \textit{Triederpaar}.

We denote by $l_q$, $m_r$ both the linear forms and the planes defined by them. 
Let $\ell_{q,r} = l_q \cap m_r$ be the line of  intersection of these two planes. Note that the indices do not commute, namely $\ell_{q,r} \ne \ell_{r,q}$. The line  $\ell_{q,r}$ is in~$S$, hence  we obtain nine lines in $S$ if we can show the $\ell_{q,r}$ to be distinct. 

Suppose that $\ell_{\alpha,1} = \ell_{\beta,2}$, where $\alpha$ is not necessarily distinct from $\beta$.
%
%
	Take partial derivatives of Equation~\eqref{eq:steiner-normal}:
	
	\[
	\left(\frac{\partial l_1} {\partial x_k}l_2l_3 + l_1\frac{\partial l_2} {\partial x_k}l_3 + l_1l_2\frac{\partial l_3} {\partial x_k}\right) 
	- \left(\frac{\partial m_1} {\partial x_k}m_2m_3 +  m_1\frac{\partial m_2} {\partial x_k}m_3 + m_1m_2\frac{\partial m_1} {\partial x_k}\right)   
	\]
	Let $\gamma \in \{1,2,3\} \setminus \{\alpha, \beta \}$, and $P = \ell_{\alpha,1} \cap l_\gamma$.
	Note that $P$ is in $l_\alpha$ and $l_\gamma$, thus the first terms between the parenthesis vanish at $P$. Since  $P$ is in $m_1$ and $m_2$ the terms in the second parenthesis also vanish. Hence, $S$ is singular at $P$, contradicting that $S$ is smooth. 
	This proves that  the nine lines are distinct and the $l_i$, $m_i$ are tritangent planes of $S$ (they contain three lines each).  
	
\begin{dfn}
\label{def:tried}
Let $\mathcal L$ be a set of nine lines in $S$.
We call an unordered pair of unordered triples of tritangent planes $\{l_1, l_2, l_3\}$ and $\{m_1, m_2, m_3\}$ a \emph{Triederpaar} if each triple contains all the lines in $\mathcal L$. 
\end{dfn}

\begin{prop}\label{prop: Tried-CSeq}
	There is a natural bijection between \cayleys~for $S$ and the set of Triederpaare.
\end{prop}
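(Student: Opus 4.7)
The plan is to establish the bijection by constructing maps in both directions and verifying that they are mutually inverse up to the rescaling of linear forms that underlies the notion of ``essentially distinct'' \cayleys.

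For the forward map, given a \cayley~$f = l_1l_2l_3 - m_1m_2m_3$ I would send it to the unordered pair of unordered triples of planes $(\{l_1,l_2,l_3\},\{m_1,m_2,m_3\})$. The argument preceding Definition~\ref{def:tried} already shows that the nine lines $\ell_{q,r}=l_q\cap m_r$ are distinct and lie in $S$, and that each $l_q$ (resp.\ $m_r$) is a tritangent plane whose three lines on $S$ are precisely $\ell_{q,1},\ell_{q,2},\ell_{q,3}$ (resp.\ $\ell_{1,r},\ell_{2,r},\ell_{3,r}$). Hence both triples cover the same set $\mathcal L$ of nine lines and we obtain a Triederpaar.

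The heart of the proof is the backward map. Starting from a Triederpaar with underlying set $\mathcal L$ of nine lines, I plan a pencil-of-cubics argument to produce a \cayley. First, the reducible cubic $V(l_1l_2l_3)$ meets $S$ in a curve of degree $3\cdot 3 = 9$ by B\'ezout; since each tritangent plane $l_i$ cuts out exactly three lines on $S$ and the Triederpaar condition forces the resulting nine lines to be exactly $\mathcal L$, we obtain $V(l_1l_2l_3)\cap S = \mathcal L = V(m_1m_2m_3)\cap S$. Next, pick a point $Q\in S$ off every line of $\mathcal L$, which exists because $\mathcal L$ is one-dimensional and $S$ is two-dimensional; at this point $(l_1l_2l_3)(Q)\neq 0$ and $(m_1m_2m_3)(Q)\neq 0$. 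Setting
\[
c = \frac{(l_1l_2l_3)(Q)}{(m_1m_2m_3)(Q)}, \qquad g = l_1l_2l_3 - c\cdot m_1m_2m_3,
\]
the cubic $g$ vanishes on $\mathcal L$ and at $Q$. If $g$ were not identically zero on $S$, then $V(g)\cap S$ would be a proper curve on $S$ of degree at most $9$ by B\'ezout, forcing $V(g)\cap S = \mathcal L$ and contradicting $g(Q)=0$. Therefore $g = \lambda f$ for some $\lambda\neq 0$, and absorbing $c$ and $\lambda$ into the linear forms (which are defined only up to nonzero scalar, leaving the planes unchanged) yields an equation $f = l_1l_2l_3 - m_1'm_2'm_3'$ with $V(m_i') = V(m_i)$.

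The main obstacle is precisely this pencil-of-cubics / B\'ezout step: one must rule out that $g$ restricts to a nonzero section on $S$, which is exactly what forces the pair of triples to arise from a genuine \cayley. Once the two maps are in hand, checking that they are mutually inverse is straightforward: the forward-then-backward composition recovers the original polynomial up to the indicated rescaling of the $l_i$ and $m_j$, while the backward-then-forward composition obviously returns the same pair of triples of planes. This completes the natural bijection.
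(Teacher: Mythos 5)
Your argument is correct, and your forward direction is the same as the paper's (both rely on the discussion preceding Definition~\ref{def:tried}). For the backward direction, however, you take a genuinely different route. The paper proves that the linear system $\mathcal F'$ of \emph{all} cubics containing the nine lines $\mathcal L$ is one-dimensional, via an explicit parameter count that invokes Claim~II in the proof of Lemma~\ref{lem: claim} (the line $m_1\cap m_2$ meets $S$ in at most three points); it then concludes that $S$ lies in the pencil spanned by $l_1l_2l_3$ and $m_1m_2m_3$ and evaluates at a point off $\mathcal L$ to pin down the scalar. You instead pick the member $g=l_1l_2l_3-c\,m_1m_2m_3$ of that pencil which vanishes at an auxiliary point $Q\in S\setminus\mathcal L$ and apply B\'ezout directly: if $f$ does not divide $g$, then $V(f)\cap V(g)$ is a curve of degree $9$ containing the nine distinct lines of $\mathcal L$, hence equal to $\mathcal L$ as a set, contradicting $Q\in V(g)\cap S$. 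Your version is more elementary and self-contained --- it needs only irreducibility of $f$ (from smoothness of $S$) and B\'ezout for surfaces in $\mathbb P^3$, and it sidesteps the paper's somewhat informal dimension count --- while the paper's approach yields the slightly stronger statement that \emph{every} cubic through $\mathcal L$ lies in the pencil, which is not needed for the proposition. Two small points that you share with the paper and could make explicit: $g$ is not the zero polynomial because the two triples of a Triederpaar consist of six distinct tritangent planes, so $l_1l_2l_3$ and $m_1m_2m_3$ are not proportional by unique factorization; and a point $Q\in S$ avoiding all nine lines exists because $S$ is irreducible of dimension two. Neither is a serious gap.
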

\begin{proof}
By our discussion above we have that  a \cayley~induces a Triederpaar. 	

Conversely, let $\mathcal L$ be a set of nine lines in $S$ that determines a Triederpaar with tritangent planes $\{l_1, l_2, l_3\}$ and $\{m_1, m_2, m_3\}$. The linear forms corresponding to the tritangent planes are determined up to multiplication with a constant. Thus, 
	\begin{align} \label{eq:family}
		\mu l_1l_2l_3 - \nu m_1m_2m_3 = 0,
	\end{align} 
	with $[\mu:\nu] \in \mathbb P^1$, is a one parameter family $\mathcal F$ of smooth cubics  that contain  the nine lines of $\mathcal L$.
	
	Let $\mathcal F'$ be the family of smooth cubics containing $\mathcal L$. We have that $\mathcal F\subset \mathcal F'$. We now prove that $\mathcal F'$ has dimension~1 which implies  $\mathcal F=\mathcal F'$. For this we need the following claim: the line $\ell = m_1 \cap m_2$ intersects $S$ in at most three points. This claim is proven as Claim~II in the proof of Lemma~\ref{lem: claim}.
	
	Suppose $g \in \mathcal F'$. By a coordinate change we assume that $m_1$ is the plane $\{y=0\}$, and $m_2$ is $\{z=0\}$. Since  $\ell = m_1 \cap m_2$ intersects $V(g)$ in at most three points, the polynomial $g|_{y=z=0}$ is not identically zero, so we may choose a coefficient and scale it to 1. The polynomials $g|_{y=0}$, $g|_{z=0}$ are cubics in three variables, depend on ten parameters each, vanish on the triples of lines $\{\ell_{1,1},\ell_{2,1},\ell_{3,1}\}$ and $\{\ell_{1,2},\ell_{2,2},\ell_{3,2}\}$ respectively, and share the terms of $g|_{y=z=0}$, so at least one term has coefficient~1. Thus, $g|_{y=0}$ and $g|_{z=0}$ are determined, giving $10+10-4 = 16$ coefficients of $g$. This leaves 4 parameters free, namely the coefficients of $y^3$, $y^2z$, $yz^2$, $z^4$, for making $V(g)$ vanish on $C = \{\ell_{1,3},\ell_{2,3},\ell_{3,3}\}$ as well. A cubic is defined by 9 parameters, but we already have some points $C \cap \ell_{ij}$ for $j=1,2$. Specifically, the Claim~II in the proof of Lemma~\ref{lem: claim} implies that  $\ell_{1,3} \cup \ell_{2,3} \cup \ell_{3,3}$ intersects $m_1$ in three points which are contained in $V(g)$, likewise for $m_2$. Thus $V(g) \cap C$ has six points already, and we need to use another three conditions to ensure $V(G)$ vanishes on $C$, bringing the dimension of $\mathcal F'$ down to 1.
	
	
	Finally, since  $S$ is smooth it does not contain a plane so $\mu,\nu$ are nonzero. Dividing by $\mu$ we get $l_1l_2l_3 - \kappa m_1m_2m_3 = 0$. Let $P \in S$ be a point not in a line of $\mathcal L$. Since $S \cap l_i$ equals the union of three lines of $\mathcal L$, the form $l_1l_2l_3$ evaluated at $P$ is nonzero. Similarly for $m_1m_2m_3$. Thus $\kappa$ is equal to $\frac{l_1l_2l_3}{m_1m_2m_3}$ evaluated at~$P$. Including   $\kappa$ in one of the forms $m_i$ we obtain the \cayleys.
\end{proof}

\subsection{Counting  120 \cayleys~in the classical way} \label{sec:count-steiner}
In this subsection we count the number of \cayleys~by counting Triederpaare. This is done by Henderson in  Section~12 of \cite{hen11}.  There he proposes the argument below and assumes  the following lemma that we prove at the end of this subsection.
\begin{lemma}\label{lem: claim}
Let $S$ be a cubic surface and let $l_1,l_2$ be two tritangent planes containing $6$ of the $27$ lines on $S$. Then this determines a Triederpaar.
\end{lemma}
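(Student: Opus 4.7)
The plan is to extend $l_1, l_2$ to a full Triederpaar by producing a second triple of tritangent planes $m_1, m_2, m_3$ via a matching of the six lines, then extracting the missing plane $l_3$ from a pencil argument. Write $S \cap l_1 = a_1 + a_2 + a_3$, $S \cap l_2 = b_1 + b_2 + b_3$, and set $\ell_{12} = l_1 \cap l_2$.

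The first step is to establish a canonical pairing between the $a_i$ and $b_j$. Each $a_i \cap \ell_{12}$ and $b_j \cap \ell_{12}$ is a single point (both lines are coplanar with $\ell_{12}$), and together they account for the length-three scheme $\ell_{12} \cap S$ viewed from either side. A coincidence $a_i \cap \ell_{12} = a_{i'} \cap \ell_{12}$ at a point $P$ would, by length matching on $\ell_{12} \cap S$, force a matching coincidence on the $b$-side at $P$, and then Lemma~\ref{lemma:singular-point} would give $l_1 = T_P S = l_2$, a contradiction. Hence the three points $\{a_i \cap \ell_{12}\}$ are distinct and equal, after relabeling, to $\{b_i \cap \ell_{12}\}$, yielding a bijection $a_i \leftrightarrow b_i$ meeting at $P_i \in \ell_{12}$. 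Each pair spans a plane $m_i$, and the plane cubic $S \cap m_i$ decomposes as $a_i + b_i + Q_i$ for a line $Q_i$, so each $m_i$ is a tritangent plane.

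The main step, and the main obstacle, is showing $Q_1, Q_2, Q_3$ are coplanar. For this I would work in the pencil of cubics $\lambda \, S + \mu \, m_1 m_2 m_3$: every member vanishes on the nine lines $\{a_i, b_j, Q_k\}$, since $m_1 m_2 m_3$ already does plane by plane. Restricting $S$ and $m_1 m_2 m_3$ to $\ell_{12} \cong \mathbb{P}^1$ produces two cubic polynomials with the same simple zero divisor $P_1 + P_2 + P_3$, so they are scalar multiples; for a suitable constant $c$ the cubic $F := S - c \, m_1 m_2 m_3$ vanishes identically on $\ell_{12}$. Then $F|_{l_1}$ is a plane cubic on $l_1$ vanishing on the four distinct lines $a_1, a_2, a_3, \ell_{12}$, which forces $F|_{l_1} \equiv 0$ since a plane cubic has at most three line components; hence $l_1 \mid F$, and symmetrically $l_2 \mid F$, giving $F = l_1 l_2 l_3$ for a linear form $l_3$.

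Rearranging produces the Cayley-Salmon equation $S = l_1 l_2 l_3 + c \, m_1 m_2 m_3$ (absorb $c$ into $m_3$), whose nine lines $l_q \cap m_r$ are precisely $\{a_i, b_i, Q_i\}$; by Proposition~\ref{prop: Tried-CSeq} the associated Triederpaar $\{l_1, l_2, l_3\}, \{m_1, m_2, m_3\}$ contains the given pair $l_1, l_2$. Along the way I would verify the auxiliary Claim~II needed in the proof of Proposition~\ref{prop: Tried-CSeq}, that $m_i \cap m_j$ meets $S$ in only three points, by arguing that otherwise $m_i \cap m_j$ would be a line on $S$ contained in both $m_i \cap S$ and $m_j \cap S$, which via the decompositions above forces a coincidence among the nine distinct lines. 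The crucial ingredient throughout is the upgrade, in the pencil argument, from mere vanishing on $\ell_{12}$ to full divisibility by $l_1 l_2$ via the degree-three bound on line components of a plane cubic.
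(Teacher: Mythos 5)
Your proof is correct, but it takes a genuinely different route from the paper's. Both arguments begin by pairing the three lines of $l_1$ with the three lines of $l_2$ to produce the tritangent planes $m_1,m_2,m_3$, though even here the mechanisms differ: the paper invokes the incidence fact from Salmon's $27$-lines construction (every line of $S$ not in $l_1$ lies in a tritangent plane with exactly one line of $l_1$), whereas you extract the pairing intrinsically from the three points of $S\cap\ell_{12}$, using the length-three divisor on $\ell_{12}$ and Lemma~\ref{lemma:singular-point} to rule out coincidences --- a self-contained and arguably cleaner step (note it silently uses that $\ell_{12}$ is none of the six given lines, which is exactly the hypothesis that $l_1,l_2$ contain six distinct lines). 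The real divergence is in proving $Q_1,Q_2,Q_3$ coplanar: the paper shows they pairwise intersect by tracking the three points of $S\cap(m_i\cap m_j)$ (its Claims I--III) and then concludes coplanarity since pairwise intersecting lines on a smooth cubic are either coplanar or concurrent, and concurrence forces coplanarity via the tangent plane; you instead manufacture the Cayley-Salmon equation directly, normalizing $c$ so that $f-c\,m_1m_2m_3$ vanishes on $\ell_{12}$, upgrading to divisibility by $l_1l_2$ via the four-distinct-lines bound on a plane cubic, and reading off $l_3$. Your version buys more: it produces the Cayley-Salmon equation itself, so for this Triederpaar the dimension count in the converse direction of Proposition~\ref{prop: Tried-CSeq} becomes unnecessary. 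Two small points of hygiene: your closing citation of Proposition~\ref{prop: Tried-CSeq} should refer only to its easy direction (the discussion preceding it, showing a Cayley-Salmon equation yields nine distinct lines and a Triederpaar), since the converse direction of that proposition itself relies on Claim~II of the present lemma and citing it wholesale would look circular; and your identification $l_3\cap m_i=Q_i$ implicitly uses the distinctness of the nine lines $l_q\cap m_r$ established at the start of Section~\ref{sec:2.5}, which is worth citing explicitly.
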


The lemma above implies that the Triederpaar is uniquely determined by a pair of tritangent planes containing $6$ of the $27$ lines of $S$.
For the first plane there are 45 possibilities. Then the number of tritangent planes passing through at least one of the three lines in the first plane is 13. This  again follows from the fact that there are $5$ tritangent planes through each line. Hence we get   32 ways to choose the second plane. 
The number of triples of tritangent planes passing through nine lines is then $\frac{45 \cdot 32 }{ 3!} = 240$. Dividing these in pairs we obtain  120 Triederpaare, and so 120 \cayleys.

\begin{proof}[Proof of Lemma~\ref{lem: claim}]
Choose a tritangent plane $l_1$.  Let $\ell_{1,1}$, $\ell_{1,2}$, $\ell_{1,3}$ be the lines in $l_1 \cap S$. Choose a tritangent plane $l_2$ not passing through the $\ell_{1,i}$. Recall Cayley's construction of the 27 lines of Section~\ref{section:classical-way}. We have seen that given a line $L \subset S$ with $L \neq\ell_{1,1}$, $\ell_{1,2}$, $\ell_{1,3}$, then there is a tritangent plane containing $L$ and exactly one $\ell_{1,i}$. Thus we can name the lines in $l_2$ as $\ell_{2,1}$, $\ell_{2,2}$, $\ell_{2,3}$ with the property that $\ell_{1,j}$ and $\ell_{2,j}$ determine a tritangent plane $m_j$.
We now set $\ell_{3,j}$ to be the line not yet named in $m_j \cap S$ and define $\mathcal{L}=\{\left(\ell_{ij}\right)_{i,j=1,2,3}\}$.
Next, we show that $\ell_{3,1}$, $\ell_{3,2}$, $\ell_{3,3}$ are coplanar, i.e. there exists a plane $l_3$, such that $\ell_{3,1},\ell_{3,2},\ell_{3,3}\subset l_3$. Then $(\{l_1,l_2,l_3\},\{m_1,m_2,m_3\})$ forms a Triederpaar, which completes the proof.
As a first step we show that $\ell_{3,1}$ intersects $\ell_{3,2}$. This follows from the following three claims:

\begin{enumerate}
\item[] Claim I: $\ell_{3,1}$ and $\ell_{3,2}$ are distinct.\\
Observe that $\ell_{3,1} = \ell_{3,2}$ if and only if the line $\ell := m_1 \cap m_2$ is contained in~$S$. Moreover if  three lines  $L$, $L'$, $L''\subset S$ intersect at a point $P$, then they are coplanar. In fact, the smoothness of $S$ implies that they are contained in the tangent plane $T_P S$. Hence as $\ell_{1,1}, \ell_{2,1} \subseteq m_1$ and $\ell_{1,2}, \ell_{2,2} \subseteq m_2$, we get $\ell_{1,1} \cap \ell_{1,2}, \ell_{2,1} \cap \ell_{2,2} \in m_1 \cap m_2 = \ell$. Thus $\ell_{1,1}, \ell_{2,1}, \ell_{1,2}, \ell_{2,2},$ are coplanar, contradicting that $m_1 \ne m_2$.
\item[] Claim II: the line $\ell = m_1 \cap m_2$ intersects $S$ in at most three points.\\
The argument for claim I also proves that $\ell \not \subset S$. Thus $\ell$ intersects $S$ in at most three points.
\item[] Claim III: the three lines in $m_i \cap S$ intersect $\ell=m_1\cap m_2$ in three different points.\\
Suppose that two lines $L$, $L' \subset m_1 \cap S$ intersect in a point $P$ of~$\ell$. By construction there is a line $L'' \subset m_2 \cap S$ intersecting one of $L$ and $L'$; say~$L$. As $L \cap L'' \in m_1 \cap m_2 = \ell$, then $L \cap L' \cap L'' = \{P\}$, and so $L, L', L''$ are coplanar. This implies that $m_1 = m_2$, contradicting that $l_{1,1} \ne l_{1,2}$. 

\end{enumerate}

Putting these claims together: let $p_1 = \ell_{1,1} \cap \ell$, $p_2 = \ell_{2,1} \cap \ell$ and $p_3 = \ell_{3,1} \cap \ell$ be the intersection points of the lines in $m_1 \cap S$ with $\ell$. Note that $p_1,p_2,p_3 \in S \cap \ell$, thus these are the three intersection points of $\ell$ with $S$ and by claim~III they are distinct. By construction $\ell_{1,1}$ intersects $\ell_{1,2}$. The intersection point is both in $m_1$ and $m_2$, thus $\ell_{1,1} \cap \ell_{1,2} = p_1$. Similarly, $\ell_{2,1} \cap \ell_{2,2} = p_2$. Hence, by Claim~III we get that $\ell_{3,2}$ intersects $\ell$ on $p_3$, proving that $\ell_{3,1}$ and $\ell_{3,2}$ intersect.

Analogously, $\ell_{3,2}$ intersects $\ell_{3,3}$ and $\ell_{3,3}$ intersects $\ell_{3,1}$, proving the lines are coplanar.
\end{proof}



\section{Computing the 120 \cayleys}\label{sec: computing}
In this section we use the isomorphism between   $S$ and the blow up of $\mathbb P^2$ at six points, denoted by  $\mathrm{Bl}_{p_1,\dots,p_6}\mathbb{P}^2$,  to first study the incidence relations between the $27$ lines and then to give 
a description of the possible Triederpaare. Then using the convention in \cite{B-K} we show the procedure  to compute the $120$ \cayleys~and we apply it to the Clebsch surface and to  the octanomial model presented in \cite{pss19}.

To begin with, we recall how to obtain the $27$ lines and the $45$ tritangent planes from $\mathrm{Bl}_{p_1,\dots,p_6}\mathbb{P}^2$. Let  $\pi:\mathrm{Bl}_{p_1,\dots,p_6}\mathbb{P}^2\to \mathbb P^2 $ be the blow-down map.  Consider the following curves:

\begin{itemize}
	\item $E_i$, the fiber over $p_i$.
	\item $F_{ij}$, the proper transform of the line $\overline{p_ip_j}$ through $p_i$, $p_j$ (that is, the curve that remains after deleting $E_i$, $E_j$ from $\inv \pi(\overline{p_ip_j})$).
	\item $G_{j}$, the proper transform of the conic $C_j$ through $\{p_1, \dots, p_6\} \setminus \{p_j\}$ (that is, the curve that remains after deleting all $E_k$ in $\inv \pi(C_j)$).
\end{itemize}

After showing that $E_i, F_{ij},$ and $G_j$ are lines (see \cite{dol12}, Section 8.3.1) we get the $27$ lines in 
$S$. 

Note that this description makes it simple to deduce the incidence relations between the lines. In fact, for any $i$ we have that $E_i$ is incident to only  10 lines. These are $F_{ij}$ and $G_j$ for $j \ne i$. The line  $G_i$ intersects $F_{ij}$ for every $j$ and $E_j$ for $j\neq i$. 
 For $F_{ij}$ observe that it is incident to $E_i$, $E_j$, $G_i$, $G_j$ and also to $F_{kl}$ with $\{i,j\} \cap \{k,l\} = \varnothing$  (the intersection point is $\inv \pi(\overline{p_ip_j} \cap \overline{p_kp_l})$). Any other intersection is excluded since it would contradict the hypothesis that $p_1, \dots, p_6$ are in general position.  
We obtain  that any line $L \subset S$ meets exactly ten other lines on $S$. Moreover, one can see that these ten lines come in pairs that are coplanar, so there are planes that intersect $S$ in 3 distinct lines $L, L', L''$. Any such   plane  is  a tritangent plane. This shows that 
a tritangent plane is determined by two types of triples of the 27 lines:
\begin{align} \label{eq:form-tritangent}
	&\text{Type I }= \{E_i, G_j, F_{ij}\}  &\text{Type II }=\{F_{ij}, F_{kl}, F_{nm}\} 
\end{align}

There are $6 \cdot 5 = 30$ possibilities for type I and   $\binom 6  2 \cdot \binom 4  2 / 3! = 15$ possibilities for type II  (this is the number of partitions $\{i,j\}, \{k,l\}, \{n,m\}$ of $\{1, \dots, 6\}$).
The description of the types of tritangent planes can be used to 
list all possible Triederpaare. Following \cite[Remark 4.6]{B-K} we may write a Triederpaar as a 3 by 3 matrix such that each entry is a line of $S$. Each row (\emph{resp.} each column) contains three lines of $S$ which determine a tritangent plane. The first triple of the Triederpaar is the one associated to the rows and the other one is associated to the columns.

Note that if two rows determine tritangent planes of type II of Equation~\eqref{eq:form-tritangent}, then the third row is also of type II. Thus in a Triederpaar we may have 0,1, or 3 rows of type~II. To summarize we have the following options:
\begin{align} \label{eq:form-steiner}
\begin{matrix}
E_i & G_j & F_{ij} \\
G_k & F_{jk} & E_{j} \\
F_{ik} & E_{k} & G_{i} 
\end{matrix}
&& 
\begin{matrix}
E_i & G_j & F_{ij} \\
G_k & E_l & F_{lk} \\
F_{ik} & F_{jl} & F_{mn} 
\end{matrix}
&&
\begin{matrix}
F_{ij} & F_{lm} & F_{kn} \\
F_{ln} & F_{ik} & F_{jm} \\
F_{km} & F_{jn} & F_{il} 
\end{matrix}
\end{align}
The Triederpaar we obtain from a matrix is invariant under permutation of rows and columns, and under transposition of the matrix. Thus, the number of essentially distinct ways of filling up the indices is $\binom 6  3 = 20$ for the first type; $\binom 6 4 \cdot \binom 4  2 = 90$ for the second type; and $\binom 5 3 = 10$ for the third type (we may assume $i=1$ and $m<n$, so one has to choose $j,k,l$ out of 5 numbers). This recovers the number 120 of Triederpaare.

In the case of the Clebsch surface  and of the octanomial model we have the explicit equations for the 27 lines in $S$. Hence   we can plug them in the matrices in~\eqref{eq:form-steiner}, compute the  associated tritangent planes and then obtain the \cayleys. 
\begin{exa}
The polynomial we gave in Example~\ref{example:clebsch} for the Clebsch surface is a coordinate transformation of the polynomial:
\[f = -{(w + x + y + z)}^{3} + w^{3} + x^{3} + y^{3} + z^{3}. \]
One \cayley{} for $f$ is:
\begin{align*}	
-3 \, {\left(w + x\right)} w x - 3 \, {\left(w + x + y\right)} {\left(w
+ x + z\right)} {\left(y + z\right)}
\end{align*}
The file \texttt{Clebsch-Tritangent-CS} in the supplementary website contains equations for the 45 tritangent planes and the 120 \cayleys{} of the Clebsch surface given by the polynomial $f$.
\end{exa}

\begin{exa}
We also perform this calculation 
 for the lines of the octanomial model of cubic surfaces presented in \cite{pss19}. 
This is a polynomial of the form $$
a\cdot xyz+b\cdot xyw+c\cdot xzw+d\cdot yzw+e\cdot x^2y+ f\cdot xy^2 +g\cdot z^2w+h\cdot zw^2
$$
where the coefficients $a,\dots,h$ are in terms of the $d_1,\dots,d_6$  as in \eqref{equ:points}.
Any smooth cubic surface in $\mathbb P^3$ can be given in this form after a coordinate change.
The calculation are done in the field $\mathbb Q(d_1,\ldots,d_6)$.
The code and the output  can be found at our supplementary website. In particular the $120$ \cayleys~are listed in the file \texttt{Output-Octanomial}.

\end{exa}

\section{\cayleys~from the anticanonical embedding}\label{sec:3}
In this section, we relate the \cayleys\; to recent work of Cueto and Deopurkar \cite{C-D}. Given a smooth cubic surface $S$, Cueto and Deopurkar construct an embedding -- which they coin the \textit{anti-canonical embedding} -- of $S$ into $\mathbb{P}^{44}$. This is given by the $45$ tritangent planes of $S$. More precisely, when considering a realization $S\subset\mathbb{P}^3$ the coordinate functions of these embeddings are (up to scaling) given by the equations of each of the tritangent planes. In \cite{C-D}, the ideal of this embedding into $\mathbb{P}^{44}$ is explicitely computed and this paves the way for several applications in the study of the tropicalization of $S$ whenever $S$ does not have Eckardt points. We note that the ideal of the anti-canonical embedding however is independent of the existence of Eckardt points.\par

We show that given a smooth cubic surface $S$ as the blow-up of $\mathbb{P}^2$ induced by the parameters $d_1,\dots,d_6$ as in  \eqref{equ:points}, we may derive the $120$ \cayleys~from the ideal of the anti-canonical embedding of $S$ without knowing the explicit equations of the $27$ lines. Our considerations  follow from the constructions in Sections $2$ and $3$ of \cite{C-D}, which is our main reference for this section. \par

As we have already observed  we can see $S$ as the blow-up of $\mathbb P^2$ at the six points in~\eqref{equ:points}. This reveals that the moduli space of marked cubic del Pezzo surfaces contains an open subset isomorphic to a dense open subset of $(\mathbb P^2)^6$ (this elaborated in detail in \cite[Section 2]{C-D}). This open subset is the projectification of the root lattice $E_6$, which is the complement of the root hyperplane arrangement. Moreover  the reflection group $W(E_6)$ acts on $(d_1,\dots,d_6)$.

Consider the symbols $x_{ij}$  and $y_{ijklmn}$. These correspond to  tritangent planes of Type I and Type II  as in Equation~\eqref{eq:form-tritangent}. 

\noindent Theorem 3.5 in \cite{C-D} states that  $S\cong \mathrm{Bl}_{p_1,\dots,p_6}\mathbb P^2$ is isomorphic to the variety in $\mathbb{P}^{44}$ cut out by the ideal $I$ in $\mathbb{K}\left[\left(x_{ij}\right)_{\substack{i,j=1,\dots,6\\i\neq j}},\left(y_{ijklmn}\right)_{\substack{\{i,j\}\sqcup\{k,l\}\sqcup\{m,n\}\\=\{1,\dots,6\}}}\right]$ generated by all $270$ $W(E_6)$-conjugates of
\begin{equation}\label{eq: linear}
(d_3-d_4)(d_1+d_3+d_4)x_{21}-(d_2-d_4)(d_1+d_2+d_4)x_{31}+(d_2-d_3)(d_1+d_2+d_3)x_{41},
\end{equation}
 and all $120$ $W(E_6)$-conjugates of
\begin{equation}
\label{equ:binom}
x_{12}x_{23}x_{31}-x_{13}x_{32}x_{21}.
\end{equation}

\noindent Now define the map $\phi:\mathbb P^3\to\mathbb P^{44}$ associated to the ring map $$\phi^*:\mathbb K[x_{ij},y_{ijklmn}]\to \mathbb K[x,y,z,w]$$ that sends $x_{ij}$ (\emph{resp.}  $y_{ijklmn}$) to the linear form defining the corresponding tritangent plane. The map $\phi$ is an embedding. In fact  the rank of the matrix associated to $\phi^*$ is $4$ since by Section \ref{sec:2.5} any four tritangent planes of a Triederpaar have empty intersection. 
Moreover, it follows immediatly from the proof of Theorem 3.5 in \cite{C-D} that the image of $\phi$ is exactly $V(I)$. 

We now  observe that pull-backs via $\phi^*$ of each of the cubics in Equation~\eqref{equ:binom} are exactly the $120$ \cayleys \;of $S$. 
In the case in which we do not have knowledge of the equations of the $27$ lines  the map $\phi$ can be computed in the following way. Let  $\mathcal L_S$ be the linear span of $ S\cong V(I)\subset\mathbb{P}^{44}$. This is defined by  the linear forms in Equation~\eqref{eq: linear} and it is isomorphic to $\mathbb{P}^3$.  The  map $\phi $ is then  the linear map
\begin{equation}
\psi :\mathbb{P}^{3}\to\mathbb{P}^{44},
\end{equation}
whose image is $\mathcal L_S$ and which is an isomorphism onto its image. Therefore, we have proved the following proposition.

\begin{prop}\label{prop:sec 4}
Let $\phi:\mathbb{P}^3\to \mathcal{L}_S\subset\mathbb{P}^{44}$ a linear map onto $\mathcal{L}_S$ and consider $S\cong\phi^{-1}(S)\subset\mathbb{P}^3$. Then, the images of the $120$ $W(E_6)$-conjugates of \eqref{equ:binom} via $\phi^*$ are the $120$ \cayleys~ of $\phi^{-1}(S)$ .
\end{prop}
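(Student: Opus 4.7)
The plan is to identify $\phi^*$ explicitly with the assignment of tritangent plane linear forms to the Cueto--Deopurkar coordinates, and then recognize each of the $120$ cubic binomials in the $W(E_6)$-orbit of \eqref{equ:binom} as encoding a Triederpaar, from which the statement will follow by Proposition~\ref{prop: Tried-CSeq}.

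First I would describe $\phi^*$ on the $45$ coordinates. The linear span $\mathcal{L}_S \subset \mathbb{P}^{44}$ is cut out by the $270$ conjugates of \eqref{eq: linear} and is isomorphic to $\mathbb{P}^3$, so any linear isomorphism $\phi:\mathbb{P}^3 \to \mathcal{L}_S$ pulls back the $45$ ambient coordinates to $45$ linear forms in $\mathbb{K}[x,y,z,w]$. By the Cueto--Deopurkar description of the anticanonical embedding recalled at the beginning of this section, the restriction of each $x_{ij}$ or $y_{ijklmn}$ to $\mathcal{L}_S$ is, up to a nonzero scalar, the equation of the corresponding tritangent plane of $S \subset \mathcal{L}_S$. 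Consequently $\phi^*(x_{ij})$ and $\phi^*(y_{ijklmn})$ are linear forms defining the $45$ tritangent planes of the cubic surface $\phi^{-1}(S) \subset \mathbb{P}^3$.

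Next I would check that the generator \eqref{equ:binom} encodes a Triederpaar. Via \eqref{eq:form-tritangent}, the Type~I coordinates $\{x_{12}, x_{23}, x_{31}\}$ correspond to the tritangent planes $\{E_1,G_2,F_{12}\}$, $\{E_2,G_3,F_{23}\}$, $\{E_3,G_1,F_{13}\}$, while $\{x_{13}, x_{32}, x_{21}\}$ correspond to $\{E_1,G_3,F_{13}\}$, $\{E_3,G_2,F_{23}\}$, $\{E_2,G_1,F_{12}\}$. Both triples contain the same nine lines $\{E_i, G_i : i=1,2,3\} \cup \{F_{ij}: 1\le i<j\le 3\}$, and so form a Triederpaar of the first matrix shape of \eqref{eq:form-steiner}. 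Applying $\phi^*$ to \eqref{equ:binom} therefore yields an expression $l_1 l_2 l_3 - m_1 m_2 m_3$ whose six factors are the tritangent plane equations of $\phi^{-1}(S)$ attached to this Triederpaar, and by Proposition~\ref{prop: Tried-CSeq} this is, up to a scalar, the corresponding \cayley.

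Finally, I would extend this to the full $W(E_6)$-orbit. The Weyl group acts on the $27$ lines and so permutes the $45$ tritangent planes and the $120$ Triederpaare; translated to Cueto--Deopurkar coordinates this is the symbolic action that generates the orbit of \eqref{equ:binom}. Hence every conjugate is again a difference of two monomials in three of the $45$ coordinates, whose six indices list the planes of some Triederpaar, and its pullback under $\phi^*$ is the associated \cayley\ by the same argument. The main obstacle is verifying that the $120$ conjugates are pairwise distinct and exhaust all three matrix shapes of \eqref{eq:form-steiner}, including the two shapes that force $y_{ijklmn}$-coordinates into play and are reached from \eqref{equ:binom} only through Cremona-type elements of $W(E_6)$. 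Matching this against the count of $120$ Triederpaare in Section~\ref{sec:count-steiner} identifies the $120$ pullbacks with precisely the $120$ \cayleys\ of $\phi^{-1}(S)$.
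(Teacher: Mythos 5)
Your proposal is correct and follows essentially the same route as the paper: the paper likewise defines $\phi^*$ by sending each coordinate to the linear form of the corresponding tritangent plane, observes that any linear isomorphism onto $\mathcal{L}_S$ realizes this, and concludes that the pullbacks of the $120$ conjugates of \eqref{equ:binom} are the \cayleys. You make explicit the step the paper leaves implicit — that each binomial's two monomials name two triples of tritangent planes sharing nine lines, i.e.\ a Triederpaar, so Proposition~\ref{prop: Tried-CSeq} applies — which is a faithful elaboration rather than a different argument.
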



\noindent The explicit linear forms defining $\psi$ can be at the supplementary website in the file \texttt{Code-AnticanonicalEmbedding}.
The  pullbacks of the  $120$ cubic binomials  are included in  \texttt{Output-AnticanonicalEmbedding}.

\section*{Acknowledgements}
We are grateful to Angie Cueto, Marta Panizzut, Emre Sert\"oz, and Bernd Sturmfels for many helpful discussions and suggestions regarding this project. Furthermore, we are indebted to Angie Cueto for her support regarding software related to \citep{C-D}. We would like to thank Dominic Bunnett, Jan Draisma, Emre Sert\"oz and the anonymous referees for their feedback on drafts of this article. It improved the exposition and enriched the content. M.~A.~Hahn and S.~Lamboglia gratefully acknowledge support of the LOEWE research unit Uniformized Structures in Arithmetic and Geometry.

\bibliography{C-S-arxiv}{}

\providecommand{\bysame}{\leavevmode\hbox to3em{\hrulefill}\thinspace}
\providecommand{\MR}{\relax\ifhmode\unskip\space\fi MR }
\providecommand{\MRhref}[2]{%
  \href{http://www.ams.org/mathscinet-getitem?mr=#1}{#2}
}
\providecommand{\href}[2]{#2}
\begin{thebibliography}{10}

\bibitem{greg}
John Baez, \emph{27 Lines on a Cubic Surface},
  \url{https://blogs.ams.org/visualinsight/2016/02/15/27-lines-on-a-cubic-surface/},
  2016, Accessed: 2019-11-22.

\bibitem{bcgm09}
Mauro~C Beltrametti - Ettore Carletti - Dionisio Gallarati - Giacomo
  Monti~Bragadin, \emph{Lectures on Curves, Surfaces and Projective Varieties},
   (2009).

\bibitem{Bra}
C.~C. Bramble, \emph{A {C}ollineation {G}roup {I}somorphic with the {G}roup of
  the {D}ouble {T}angents of the {P}lane {Q}uartic}, Amer. J. Math. {40} no.~4
  (1918), 351--365.

\bibitem{B-K}
Anita Buckley - Toma\v{z} Ko\v{s}ir, \emph{Determinantal representations of
  smooth cubic surfaces}, Geom. Dedicata {125} (2007), 115--140.

\bibitem{cay49}
Arthur Cayley, \emph{On the Triple Tangent Planes of Surfaces of the Third
  Order}, The Cambridge and Dublin mathematical journal {4} (1849), 118--132.

\bibitem{C-D}
Maria~Angelica Cueto - Anand Deopurkar, \emph{Anticanonical tropical cubic del
  Pezzos contain exactly 27 lines}, arXiv 1906.08196 (2019).

\bibitem{dol12}
Igor~V Dolgachev, \emph{Classical Algebraic Geometry: A Modern View},
  {Cambridge University Press}, 2012.

\bibitem{fis01}
Gerd Fischer, \emph{Plane Algebraic Curves}, vol.~15, {American Mathematical
  Soc.}, 2001.

\bibitem{M2}
Daniel~R. Grayson - Michael~E. Stillman, \emph{Macaulay2, a software system for
  research in algebraic geometry}, Available at
  \url{http://www.math.uiuc.edu/Macaulay2/}.

\bibitem{hartshorne2013algebraic}
Robin Hartshorne, \emph{Algebraic geometry}, vol.~52, Springer Science \&
  Business Media, 2013.

\bibitem{hen11}
Archibald Henderson, \emph{The Twenty-Seven Lines upon the Cubic Surface},
  {Cambridge University Press}, 1911.

\bibitem{pss19}
Marta Panizzut - Emre~Can Sert{\"o}z - Bernd Sturmfels, \emph{An {{Octanomial
  Model}} for {{Cubic Surfaces}}}, arXiv preprint arXiv:1908.06106 (2019).

\bibitem{rei88}
Miles Reid, \emph{Undergraduate Algebraic Geometry}, {Cambridge University
  Press Cambridge}, 1988.

\bibitem{rsss13}
Qingchun Ren - Steven~V. Sam - Gus Schrader - Bernd Sturmfels, \emph{The
  {{Universal Kummer Threefold}}}, Experimental Mathematics {22} no.~3 (2013),
  327--362 (en).

\bibitem{salmon1}
G.~{Salmon}, \emph{{A treatise on the analytical geometry of three dimensions.
  Revised by \textit{R. A. P. Rogers}. Fifth edition. In two volumes. Vol.
  I.}}, {London: Longmans, Green and Co.; Dublin: Hodges, Figgis and Co. XXII
  u. 470 S. (1912).}, 1912.

\bibitem{salmon2}
G.~{Salmon}, \emph{{A treatise on the analytic geometry of three dimensions.
  \(5^{\text{th}}\) edition. Edited by R. A. P. Rogers. Vol. II.}}, {London:
  Longmans, Green and Co.(1915).}, 1915.

\bibitem{sage}
W.\thinspace{}A. Stein - others, \emph{{S}age {M}athematics {S}oftware
  ({V}ersion 8.9)}, The Sage Development Team, {\tt http://www.sagemath.org}.

\end{thebibliography}
\end{document}